\newcommand{\vect}[1]{\ensuremath{\mathbf{#1}}}
\newcommand{\card}[1]{\ensuremath{\lvert{#1}\rvert}}
\newcommand{\IN}{\ensuremath{\mathbb{N}}}
\newcommand{\allpermutations}{\ensuremath{\mathbb{P}}}
\newcommand{\nset}[1]{\ensuremath{[{#1}]}}
\newcommand{\nsetSnl}{\ensuremath{\nset{n}^{\ell}_{\neq}}}
\newcommand{\subperm}[2]{\ensuremath{{#1}_{#2}}}
\newcommand{\substring}[2]{\ensuremath{{#1}[{#2}]}}
\newcommand{\symm}[1]{\ensuremath{S_{#1}}}         
\newcommand{\gensg}[1]{\ensuremath{\langle {#1} \rangle}}
\newcommand{\asc}[1]{\ensuremath{\iota_{#1}}}      
\newcommand{\desc}[1]{\ensuremath{\delta_{#1}}}    
\newcommand{\natcycle}[1]{\ensuremath{\zeta_{#1}}} 
\newcommand{\patt}[2]{\ensuremath{\Patl[{#1}]{#2}}}
\newcommand{\nsubl}[2]{\ensuremath{{\mathcal P}_{#2}{(#1)}}} 
\newcommand{\Snl}[1][\ell]{\ensuremath{\nsubl{n}{#1}}}
\newcommand{\groupasrel}[1]{\ensuremath{\gamma_{#1}}}
\newcommand{\Pow}{\mathcal{P}} 
\DeclareMathOperator{\range}{Im}
\DeclareMathOperator{\Sub}{Sub}                  
\DeclareMathOperator{\Inv}{Inv}                  
\DeclareMathOperator{\red}{red}                  
\DeclareMathOperator{\Pat}{Pat}
\DeclareMathOperator{\Comp}{Comp}
\DeclareMathOperator{\gPat}{gPat}
\DeclareMathOperator{\gComp}{gComp}
\DeclareMathOperator{\Av}{Av}
\DeclareMathOperator{\Aut}{Aut}
\DeclareMathOperator{\Rel}{Rel}
\DeclareMathOperator{\pcInv}{pcInv}
\DeclareMathOperator{\pcExt}{pcExt}
\newcommand{\Compn}[1][n]{\Comp^{(#1)}}
\newcommand{\Patl}[1][\ell]{\Pat^{(#1)}}
\newcommand{\gCompn}[1][n]{\gComp^{(#1)}}
\newcommand{\gPatl}[1][\ell]{\gPat^{(#1)}}
\theoremstyle{plain}
\newtheorem{theorem}{Theorem}[section]
\newtheorem{proposition}[theorem]{Proposition}
\newtheorem{lemma}[theorem]{Lemma}
\newtheorem{corollary}[theorem]{Corollary}
\theoremstyle{definition}
\newtheorem{definition}[theorem]{Definition}
\newtheorem{example}[theorem]{Example}
\newtheorem{remark}[theorem]{Remark}
\numberwithin{equation}{section}
\let\phi=\varphi
\let\rho=\varrho
\DeclareMathOperator{\preserves}{\triangleright}
\newcommand{\CHECK}[1]{{#1^{\scriptscriptstyle \lor}}}
\newcommand{\HAT}[1]{{#1^{\scriptscriptstyle \land}}}
\newcommand{\CHECKHAT}[1]{{#1^{\scriptscriptstyle \land\lor}}}
\newcommand{\HATCHECK}[1]{{#1^{\scriptscriptstyle \lor\land}}}
\title{Permutation groups, pattern involvement, \\ and Galois connections}
\author{Erkko Lehtonen
and
Reinhard P\"oschel \\[3ex]
Technische Universit\"at Dresden,
Institut f\"ur Algebra \\
01062 Dresden,
Germany}
\date{}
\begin{document}

\maketitle


\begin{abstract}
There is a connection between permutation groups and permutation patterns: for any subgroup $G$ of the symmetric group $\symm{\ell}$ and for any $n \geq \ell$, the set of $n$\hyp{}permutations involving only members of $G$ as $\ell$\hyp{}patterns is a subgroup of $\symm{n}$.
Making use of the monotone Galois connection induced by the pattern avoidance relation, we characterize the permutation groups that arise via pattern avoidance as automorphism groups of relations of a certain special form.
We also investigate a related monotone Galois connection for permutation groups and describe its closed sets and kernels as automorphism groups of relations.
\end{abstract}

\section{Introduction}\label{sect:intro}

The theory of pattern\hyp{}avoiding permutations has been an active field of research over the past decades.
For any permutations $\tau = \tau_1 \tau_2 \dots \tau_\ell \in \symm{\ell}$ and $\pi = \pi_1 \pi_2 \dots \pi_n \in \symm{n}$ ($\ell \leq n$), the permutation $\pi$ is said to involve $\tau$, or $\tau$ is called a pattern of $\pi$, if there exists a substring $\pi_{i_1} \pi_{i_2} \dots \pi_{i_\ell}$ ($i_1 < i_2 < \dots < i_\ell$) that is order\hyp{}isomorphic to $\tau$. If $\pi$ does not involve $\tau$, then $\pi$ is said to avoid $\tau$.
For more background and a survey on permutation patterns, we refer the reader to the monograph by Kitaev~\cite{Kitaev}.

The theory of permutation groups is a classical field of algebra that needs no special introduction here; see, e.g., Dixon and Mortimer~\cite{DixMor}.
At first sight, permutation patterns do not seem to have much to do with permutation groups.
However, and perhaps surprisingly, there is a relevant connection.
Namely, every pattern of the composition of two permutations equals the composition of some patterns of the two permutations (see Lemma~\ref{lem:Ppitau-PpiPtau}).
This fact seems to have received limited attention, although it was reported in the 1999 paper by Atkinson and Beals~\cite{AtkBea}, which deals with classes of permutations closed under pattern involvement and composition.
An important consequence of this simple yet crucial observation is that for any permutation group $G \leq \symm{n}$, every level of the class $\Av(\symm{n} \setminus G)$ of permutations avoiding the complement of $G$ is a permutation group.

This raises the question which permutation groups arise as sets of $n$\hyp{}permutations avoiding some sets of $\ell$\hyp{}permutations.
We will refer to such groups as $\ell$\hyp{}pattern subgroups of $\symm{n}$.
Our aim in this paper is to address this question by making use of Galois connections.
As for any binary relation, the pattern avoidance relation induces a monotone Galois connection -- referred to as $(\Patl, \Compn)$ -- between the sets $\symm{\ell}$ and $\symm{n}$ of permutations of the $\ell$\hyp{}element set and the $n$\hyp{}element set, respectively.
As an answer to the question, we would like to describe the closed sets of this monotone Galois connection, as well as those of its modification $(\gPatl, \gCompn)$ for permutation groups.

This paper is organized as follows.
In Section~\ref{sect:prelim}, we introduce the necessary basic definitions related to permutations and patterns.
In Section~\ref{sec:3}, we introduce the monotone Galois connections $(\Patl, \Compn)$ between the subsets of $\symm{\ell}$ and $\symm{n}$ and $(\gPatl, \gCompn)$ between the subgroups of $\symm{\ell}$ and $\symm{n}$, and we establish some of their basic properties.
In Section~\ref{sec:4}, we describe the $\ell$\hyp{}pattern subgroups of $\symm{n}$ as automorphism groups of relations.
We obtain a simpler description for the special class of $\ell$\hyp{}pattern subgroups of $\symm{n}$ that are of the form $\Compn G$ for some subgroup $G \leq \symm{\ell}$.
This is presented in Section~\ref{sec:5}, in which we also describe the Galois closures and kernels of $(\gPatl, \gCompn)$ as automorphism groups of relations.
Finally, in Section~\ref{sec:conclusion}, we make some concluding remarks and indicate possible directions for further research.


\section{Preliminaries}\label{sect:prelim}

\subsection*{General notation}

The letter $\IN$ stands for the set of nonnegative integers, and $\IN_+ := \IN \setminus \{0\}$.
For any $n \in \IN_+$, let $\nset{n} := \{1, \dots, n\}$.
The power set of a set $A$ is denoted by $\Pow(A)$, and the set of all $\ell$\hyp{}element subsets of $\nset{n}$ is denoted by $\Snl$.
Let $A^n_{\neq}$ be the set of all $n$\hyp{}tuples on a set $A$ with pairwise distinct entries.

  We will always compose functions from right to left, and we often
  denote functional composition simply by juxtaposition. Thus $fg(x) =
  (f \circ g)(x) = f(g(x))$.

  Since an $n$\hyp{}tuple $\vect{a} = (a_1, \dots, a_n) \in A^n$ is
  formally a map $\nset{n} \to A$, we can readily consider the image
  $\range\vect{a}=\{a_{1},\dots,a_{n}\}$ 
and form compositions of
  tuples with other functions. In particular, for any $\sigma \in
  \symm{n}$, we have $\vect{a} \sigma = (a_{\sigma(1)}, \dots,
  a_{\sigma(n)})$, and for any $\phi \colon A \to B$, we have $\phi
  \vect{a} = (\phi(a_1), \dots, \phi(a_n))$. We often write an
  $n$\hyp{}tuple $(a_1, \dots, a_n)$ as a string $a_1 \dots a_n$. If $I
  \subseteq \nset{n}$, $I = \{i_1, i_2, \dots, i_\ell\}$ with $i_1 <
  i_2 < \dots < i_\ell$, then we denote the substring
  $a_{i_1} a_{i_2} \dots a_{i_\ell}$ of $\vect{a}$ by
  $\substring{\vect{a}}{I}$.

\subsection*{Permutations}

  The symmetric group on the set
  $\nset{n}$ is denoted by $\symm{n}$. It comprises all permutations
  of the set $\nset{n}$, and the group operation is composition of
  functions. We will refer to permutations of $\nset{n}$ also as
  \emph{$n$\hyp{}permutations} when we want to emphasize the number
  $n$. The subgroup generated by a subset $S \subseteq \symm{n}$ is
  denoted by $\gensg{S}$.
We write $G \leq \symm{n}$ to express the fact that $G$ is a subgroup of $\symm{n}$.
For the sake of linguistic convenience, when we speak of a permutation group, we usually really mean the universe of a permutation group, i.e., the set of elements of the group stripped of its operation.

  Any permutation $\pi \in \symm{n}$ corresponds to an $n$\hyp{}tuple
  $(\pi_1, \pi_2, \dots, \pi_n)$ or, equivalently, to a string $\pi_1
  \pi_2 \dots \pi_n$, where $\pi_i = \pi(i)$ for all $i \in
  \nset{n}$. On the other hand, any $n$\hyp{}tuple $\vect{a} = (a_1,
  \dots, a_n) \in \nset{n}^n$ with no repeated elements, when viewed
  as a map $\nset{n} \to \nset{n}$, is a permutation of $\nset{n}$. In
  this paper, we will utilize these manifestations of
  permutations (bijective maps, tuples or strings), and we may switch
  between them at will, depending on which one we find the most
  convenient in each situation. In Figure~\ref{fig:1} we illustrate
  this with the graphical representation of the permutation
  $\pi=31524\in [5]^{5}_{\neq}$ 
  (the circles (shaded or not) represent the graph of the
  mapping $\pi\in \symm{5}$).

  \begin{figure}
    \centering
\begin{picture}(0,0)%
\includegraphics{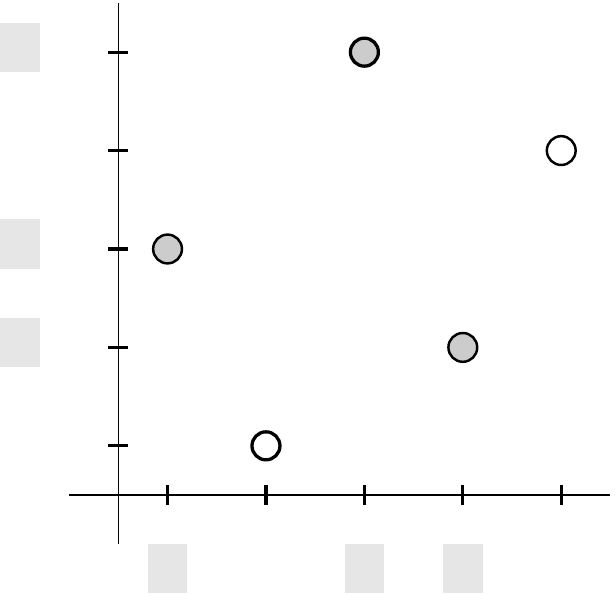}%
\end{picture}%
\setlength{\unitlength}{4144sp}%
\begingroup\makeatletter\ifx\SetFigFont\undefined%
\gdef\SetFigFont#1#2#3#4#5{%
  \reset@font\fontsize{#1}{#2pt}%
  \fontfamily{#3}\fontseries{#4}\fontshape{#5}%
  \selectfont}%
\fi\endgroup%
\begin{picture}(2803,2713)(1260,-2312)
\put(2881,-2266){\makebox(0,0)[lb]{\smash{{\SetFigFont{12}{14.4}{\rmdefault}{\mddefault}{\updefault}{\color[rgb]{.69,0,0}2}}}}}
\put(3331,-2266){\makebox(0,0)[lb]{\smash{{\SetFigFont{12}{14.4}{\rmdefault}{\mddefault}{\updefault}{\color[rgb]{.69,0,0}3}}}}}
\put(1306,119){\makebox(0,0)[lb]{\smash{{\SetFigFont{12}{14.4}{\rmdefault}{\mddefault}{\updefault}{\color[rgb]{.69,0,0}3}}}}}
\put(1306,-781){\makebox(0,0)[lb]{\smash{{\SetFigFont{12}{14.4}{\rmdefault}{\mddefault}{\updefault}{\color[rgb]{.69,0,0}2}}}}}
\put(1306,-1231){\makebox(0,0)[lb]{\smash{{\SetFigFont{12}{14.4}{\rmdefault}{\mddefault}{\updefault}{\color[rgb]{.69,0,0}1}}}}}
\put(1981,-2266){\makebox(0,0)[lb]{\smash{{\SetFigFont{12}{14.4}{\rmdefault}{\mddefault}{\updefault}{\color[rgb]{.69,0,0}1}}}}}
\put(1981,-2086){\makebox(0,0)[lb]{\smash{{\SetFigFont{12}{14.4}{\rmdefault}{\mddefault}{\updefault}{\color[rgb]{0,0,0}1}}}}}
\put(2431,-2086){\makebox(0,0)[lb]{\smash{{\SetFigFont{12}{14.4}{\rmdefault}{\mddefault}{\updefault}{\color[rgb]{0,0,0}2}}}}}
\put(2881,-2086){\makebox(0,0)[lb]{\smash{{\SetFigFont{12}{14.4}{\rmdefault}{\mddefault}{\updefault}{\color[rgb]{0,0,0}3}}}}}
\put(3331,-2086){\makebox(0,0)[lb]{\smash{{\SetFigFont{12}{14.4}{\rmdefault}{\mddefault}{\updefault}{\color[rgb]{0,0,0}4}}}}}
\put(3781,-2086){\makebox(0,0)[lb]{\smash{{\SetFigFont{12}{14.4}{\rmdefault}{\mddefault}{\updefault}{\color[rgb]{0,0,0}5}}}}}
\put(1531,-1681){\makebox(0,0)[lb]{\smash{{\SetFigFont{12}{14.4}{\rmdefault}{\mddefault}{\updefault}{\color[rgb]{0,0,0}1}}}}}
\put(1531,119){\makebox(0,0)[lb]{\smash{{\SetFigFont{12}{14.4}{\rmdefault}{\mddefault}{\updefault}{\color[rgb]{0,0,0}5}}}}}
\put(1531,-331){\makebox(0,0)[lb]{\smash{{\SetFigFont{12}{14.4}{\rmdefault}{\mddefault}{\updefault}{\color[rgb]{0,0,0}4}}}}}
\put(1531,-781){\makebox(0,0)[lb]{\smash{{\SetFigFont{12}{14.4}{\rmdefault}{\mddefault}{\updefault}{\color[rgb]{0,0,0}3}}}}}
\put(1531,-1231){\makebox(0,0)[lb]{\smash{{\SetFigFont{12}{14.4}{\rmdefault}{\mddefault}{\updefault}{\color[rgb]{0,0,0}2}}}}}
\end{picture}

    \caption{The permutation $\pi=31524$ and its pattern $\pi_{\{1,3,4\}}=231$}
    \label{fig:1}
  \end{figure}

  We will also use the conventional cycle notation for permutations:
  if $a_1, a_2, \dots, a_\ell$ are distinct elements of $\nset{n}$,
  then $(a_1 \; a_2 \; \cdots \; a_\ell)$ denotes the permutation that
  maps $a_\ell$ to $a_1$ and $a_i$ to $a_{i+1}$ for $1 \leq i \leq
  \ell - 1$ and keeps the remaining elements fixed. Such a permutation
  is called a \emph{cycle}, or an \emph{$\ell$\hyp{}cycle.} Every
  permutation is a product of pairwise disjoint cycles. For the
  permutation in Figure~\ref{fig:1} we have $\pi=(1\,3\,5\,4\,2)$.

  The following permutations will appear frequently in what follows:
  \begin{itemize}
  \item the \emph{identity permutation,} or the \emph{ascending
      permutation} $\asc{n} := 12 \dots n$,
  \item the \emph{descending permutation} $\desc{n} := n (n-1) \dots
    1$,
  \item the \emph{natural cycle} $\natcycle{n} := 2 3 \dots n 1 = (1
    \; 2 \; \cdots \; n)$.
  \end{itemize}

\subsection*{Permutation patterns and functional composition}

  First we recall some standard terminology from the theory of
  permutation patterns (see, e.g., B\'ona~\cite{Bona} or
  Kitaev~\cite{Kitaev}). For any string $\vect{u} = u_1 u_2 \dots u_m$ of distinct
  integers, the \emph{reduction} or \emph{reduced form} of $\vect{u}$,
  denoted by $\red(\vect{u})$, is the permutation obtained from
  $\vect{u}$ by replacing its $i$\hyp{}th smallest entry with $i$, for
  $1 \leq i \leq m$. A permutation $\tau \in
  \symm{\ell}$ is a \emph{pattern} (or an \emph{$\ell$\hyp{}pattern,}
  if we want to emphasize the number $\ell$) of a permutation $\pi \in
  \symm{n}$, or $\pi$ \emph{involves} $\tau$, denoted $\tau \leq \pi$,
  if there exists a substring $\vect{u} = \substring{\pi}{I} = \pi_{i_1} \pi_{i_2}
  \dots \pi_{i_\ell}$ of $\pi = \pi_1 \dots \pi_n$ (where $I = \{i_1,
  i_2, \dots, i_\ell\}$, $i_1 < i_2 < \dots < i_\ell$) such that
  $\red(\vect{u}) = \tau$. 
If $\tau \nleq \pi$, the permutation $\pi$ is said to \emph{avoid} $\tau$.

\begin{example}
Let $\pi = 31524 \in \symm{5}$ and $I = \{1,3,4\}$. Then
$\vect{u}=\substring{\pi}{I}=352$ and the corresponding pattern
is $\red(\vect{u})=231\in \symm{3}$ (denoted by $\pi_{I}$ using
Definition~\ref{def:hS}, cf.\ also the shaded circles in Figure~\ref{fig:1}).
\end{example}

We denote by $\patt{\ell}{\pi}$ the set of all $\ell$\hyp{}patterns of $\pi$, i.e., $\patt{\ell}{\pi} := \{\tau \in \symm{\ell} \mid \tau \leq \pi\}$.

\begin{example}
  \label{ex:asc-desc-cycle}
  For any $\ell, n \in \IN_+$ with $\ell < n$,
  \[
  \patt{\ell}{\asc{n}} = \{\asc{\ell}\}, \qquad \patt{\ell}{\desc{n}}
  = \{\desc{\ell}\}, \qquad \patt{\ell}{\natcycle{n}} = \{\asc{\ell},
  \natcycle{\ell}\}.
  \]
\end{example}

  The pattern involvement relation $\leq$ is a partial order on the
  set $\allpermutations := \bigcup_{n \geq 1} \symm{n}$ of all finite
  permutations. Downward closed subsets of $\allpermutations$ under
  this order are called \emph{permutation classes.} 
  For a permutation
  class $C$ and for $n \in \IN_+$, the set $C^{(n)} := C \cap
  \symm{n}$ is called the \emph{$n$\hyp{}th level} of $C$. For
  any set $B \subseteq \allpermutations$, let $\Av(B)$ be the set of
  all permutations that avoid every member of $B$. It is clear that
  $\Av(B)$ is a permutation class and, conversely, every permutation
  class is of the form $\Av(B)$ for some $B \subseteq
  \allpermutations$.

The notions introduced in the previous paragraphs can be expressed in
terms of order\hyp{}isomorphisms and functional composition as
follows.

\begin{definition}
  \label{def:hS}
  For any $I \in \Snl$, let $h_I \colon \nset{\ell} \to I$ be the
  order\hyp{}isomorphism $(\nset{\ell}, {\leq}) \to (I, {\leq})$. For
  $\pi \in \symm{n}$, define the permutation $\subperm{\pi}{I} \colon
  \nset{\ell} \to \nset{\ell}$ as $\subperm{\pi}{I} = h^{-1}_{\pi(I)}
  \circ \pi \circ h_I$ (here $\pi(I):=\{\pi(a) \mid a \in I\}$ and to be
  precise, $\subperm{\pi}{I} = h^{-1}_{\pi(I)}
  \circ \pi|_I \circ h_I$). 
\end{definition}

As explained in the beginning of this section, we can consider the mapping $h_I$ also as an $\ell$\hyp{}tuple in $I^\ell \subseteq \nset{n}^\ell$, namely, as the tuple consisting of the elements of $I$ in increasing order, or, using the notation for substrings, $h_I = \substring{\asc{n}}{I}$.

\begin{lemma}\label{A4}
  Let $n, \ell \in \IN_+$ with $\ell \leq n$.
  \begin{itemize}
    \item[\rm (i)] For any $\vect{a} \in \nset{n}^n$ and $I \in \Snl$, it holds that $\substring{\vect{a}}{I} = \vect{a} \circ h_I$.
    \item[\rm (ii)] For any $\vect{u} \in \IN^\ell_{\neq}$, it holds that $\red(\vect{u}) = h^{-1}_{\range \vect{u}} \circ \vect{u}$.
    \item[\rm (iii)] For all $\pi \in \symm{n}$, $\tau \in \symm{\ell}$, we have $\tau \leq \pi$ if and only if $\tau = \subperm{\pi}{I}$ for some $I \in \Snl$.
  \end{itemize}
\end{lemma}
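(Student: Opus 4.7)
The three parts are essentially unpackings of the definitions, with (iii) following by combining (i) and (ii). I would prove them in the given order, since (iii) uses the first two.

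For (i), the plan is to evaluate both sides pointwise. Writing $I = \{i_1 < i_2 < \dots < i_\ell\}$, the order\hyp{}isomorphism $h_I$ sends $k \mapsto i_k$, so $(\vect{a} \circ h_I)(k) = a_{i_k}$. By definition, $\substring{\vect{a}}{I} = a_{i_1} a_{i_2} \dots a_{i_\ell}$, whose $k$\hyp{}th entry is also $a_{i_k}$. Hence the two tuples agree entry by entry.

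For (ii), let $\vect{u} = u_1 \dots u_\ell$ with pairwise distinct entries. The map $h_{\range \vect{u}} \colon \nset{\ell} \to \range \vect{u}$ lists the elements of $\range \vect{u}$ in increasing order, so $h_{\range \vect{u}}(i)$ equals the $i$\hyp{}th smallest entry of $\vect{u}$. Equivalently, for any $j$, $h^{-1}_{\range \vect{u}}(u_j)$ is the rank of $u_j$ among $u_1, \dots, u_\ell$. Thus at position $j$ the tuple $h^{-1}_{\range \vect{u}} \circ \vect{u}$ assigns exactly the rank of $u_j$, which matches the definition of $\red(\vect{u})$ (replacing the $i$\hyp{}th smallest entry by $i$).

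For (iii), suppose first that $\tau \leq \pi$. Then there exists $I \in \Snl$ with $\red(\substring{\pi}{I}) = \tau$. By (i), $\substring{\pi}{I} = \pi \circ h_I$, and its image set is $\pi(I)$, so by (ii)
\[
\tau = \red(\pi \circ h_I) = h^{-1}_{\pi(I)} \circ \pi \circ h_I = \subperm{\pi}{I}.
\]
Conversely, if $\tau = \subperm{\pi}{I}$ for some $I$, the same identity shows $\tau = \red(\substring{\pi}{I})$, witnessing $\tau \leq \pi$.

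I do not anticipate any real obstacle: the only care required is to keep the two order\hyp{}isomorphisms $h_I$ and $h_{\pi(I)}$ straight and to verify that $\pi|_I$ genuinely maps $I$ onto $\pi(I)$ (which is automatic since $\pi$ is a bijection of $\nset{n}$). The slightly subtle point worth spelling out is the identification in (i) of the substring $\substring{\vect{a}}{I}$, originally defined as a string, with the composition $\vect{a} \circ h_I$, which uses the convention from Section~\ref{sect:prelim} that tuples are viewed as maps from $\nset{\ell}$.
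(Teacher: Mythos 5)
Your proposal is correct and follows essentially the same route as the paper: part (i) by pointwise evaluation of $\vect{a}\circ h_I$, part (ii) directly from the definitions of $\red$ and $h_{\range\vect{u}}$, and part (iii) by combining (i) and (ii) to identify $\subperm{\pi}{I}$ with $\red(\substring{\pi}{I})$. Your version merely spells out (ii) and the identification $\range(\pi\circ h_I)=\pi(I)$ a little more explicitly than the paper does, which is harmless.
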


\begin{proof}
  \begin{inparaenum}[(i)]
  \item Consider the composite function $\vect{a} \circ h_I$. The inner function $h_I$ provides the string $i_1 i_2 \dots i_\ell$ of indices, where $\{i_1, i_2, \dots, i_\ell\} = I$ and $i_1 < i_2 < \dots < i_\ell$. Subsequent application of $\vect{a}$ gives the substring $a_{i_1} a_{i_2} \dots a_{i_\ell} = \substring{\vect{a}}{I}$ of $\vect{a}$.
  
  \item Follows immediately from the definition of $\red(\vect{u})$ and $h_{\range \vect{u}}$.

  \item Being a composition of bijective maps, $\subperm{\pi}{I} = h_{\pi(I)}^{-1} \circ \pi \circ h_I$ is clearly a permutation of $\nset{\ell}$.
  By parts (i) and (ii), we have $h_{\pi(I)}^{-1} \circ \pi \circ h_I = \red(\substring{\pi}{I})$, so $\pi_I$ is indeed a pattern of $\pi$.
  Every $\ell$\hyp{}pattern of $\pi$ obviously arises in this way for some $I \in \Snl$.
  \end{inparaenum}
\end{proof}

In order to provide examples of the notions defined above and to help the reader feel comfortable with the formalism, we provide a simple, almost mechanical proof of the well\hyp{}known fact that pattern involvement is preserved under
reverses, complements, and inverses of permutations. Recall that the
\emph{reverse} of $\pi$ is $\pi^{\mathrm{r}} = \pi \circ \desc{n}$, and
the \emph{complement} of $\pi$ is $\pi^\mathrm{c} = \desc{n} \circ \pi$.
For example, $\pi = 31524$ we have $\pi^{\mathrm r} = 42513$ (the tuple
$\pi$ in reverse order) and
$\pi^{\mathrm c} = 35142$ (in each place we have the ``complement'' to
$n+1$: $6 = 3+3 = 1+5 = 5+1 = 2+4 = 4+2$).

\begin{lemma}
  \label{lem:rev-comp-inv}
  The following identities hold for any $\pi \in \symm{n}$ and $I \in
  \Snl$: 
  \begin{enumerate}[\upshape (i)]
  \item $\subperm{(\pi^\mathrm{r})}{I} =
    (\subperm{\pi}{\desc{n}(I)})^\mathrm{r}$,
  \item $\subperm{(\pi^\mathrm{c})}{I} =
    (\subperm{\pi}{I})^\mathrm{c}$,
  \item $\subperm{(\pi^{-1})}{I} = (\subperm{\pi}{\pi^{-1}(I)})^{-1}$.
  \end{enumerate}
\end{lemma}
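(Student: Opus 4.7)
The plan is to unfold all three identities directly from Definition~\ref{def:hS} together with the formulas $\pi^{\mathrm{r}} = \pi \circ \desc{n}$ and $\pi^{\mathrm{c}} = \desc{n} \circ \pi$, and then compare the two sides as compositions of bijections. Since $\subperm{\pi}{I} = h^{-1}_{\pi(I)} \circ \pi \circ h_I$, each side of (i)--(iii) becomes a short string of compositions, and the whole proof reduces to matching up the order-isomorphisms $h_J$ that appear on the outside.

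For parts (i) and (ii), the single auxiliary fact I need is the commutation rule
\[
\desc{n} \circ h_I = h_{\desc{n}(I)} \circ \desc{\ell}
\qquad (I \in \Snl),
\]
which I would prove in one line: if $I = \{i_1 < \dots < i_\ell\}$, then $\desc{n}(I) = \{n{+}1{-}i_\ell < \dots < n{+}1{-}i_1\}$, and evaluating either side at $j \in \nset{\ell}$ yields $n{+}1{-}i_j$. Given this, part (i) follows by writing
\[
\subperm{(\pi^{\mathrm{r}})}{I}
= h^{-1}_{\pi(\desc{n}(I))} \circ \pi \circ \desc{n} \circ h_I
= h^{-1}_{\pi(\desc{n}(I))} \circ \pi \circ h_{\desc{n}(I)} \circ \desc{\ell}
= \subperm{\pi}{\desc{n}(I)} \circ \desc{\ell}
= (\subperm{\pi}{\desc{n}(I)})^{\mathrm{r}},
\]
and part (ii) by the symmetric computation, applying the commutation rule with $\pi(I)$ in place of $I$ so that $h^{-1}_{\desc{n}(\pi(I))} \circ \desc{n} = \desc{\ell} \circ h^{-1}_{\pi(I)}$.

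Part (iii) is essentially bookkeeping: substituting $\pi^{-1}$ for $\pi$ and $\pi^{-1}(I)$ for $I$ in the definition of $\pi_I$, and using $\pi(\pi^{-1}(I)) = I$, one sees that both $\subperm{(\pi^{-1})}{I}$ and $(\subperm{\pi}{\pi^{-1}(I)})^{-1}$ reduce to $h^{-1}_{\pi^{-1}(I)} \circ \pi^{-1} \circ h_I$.

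The only mildly nontrivial step is the commutation lemma for $\desc{}$ and $h_I$; everything else is pure substitution. I expect no real obstacle, only the need to keep track of which symbol lives in $\symm{n}$ and which lives in $\symm{\ell}$, so that the subscripts on $h$ and $\desc{}$ stay correctly aligned throughout.
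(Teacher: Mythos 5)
Your proposal is correct and follows essentially the same route as the paper: both reduce (i) and (ii) to the single identity $h_{\desc{n}(I)}^{-1} \circ \desc{n} \circ h_I = \desc{\ell}$ (the paper obtains it by inserting $h_{\desc{n}(I)} \circ h_{\desc{n}(I)}^{-1}$ and recognizing $\subperm{(\desc{n})}{I} = \desc{\ell}$, which is just your commutation rule rearranged), and both handle (iii) by the same substitution using $\pi(\pi^{-1}(I)) = I$. The only difference is presentational: you verify the key identity by direct evaluation rather than citing that every $\ell$\hyp{}pattern of $\desc{n}$ is $\desc{\ell}$.
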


\begin{proof}
The identities are verified by straightforward calculations:
  \begin{align*}
    \subperm{(\pi^\mathrm{r})}{I} &= h^{-1}_{\pi^\mathrm{r}(I)} \circ
    \pi^\mathrm{r} \circ h_I
    = h^{-1}_{\pi(\desc{n}(I))} \circ \pi \circ \desc{n} \circ h_I \\ 
    &= h^{-1}_{\pi(\desc{n}(I))} \circ \pi \circ h_{\desc{n}(I)} \circ
    h_{\desc{n}(I)}^{-1} \circ \desc{n} \circ h_I  \\
    &= \subperm{\pi}{\desc{n}(I)} \circ \subperm{(\desc{n})}{I} =
    \subperm{\pi}{\desc{n}(I)} \circ \desc{\ell}
    = (\subperm{\pi}{\desc{n}(I)})^\mathrm{r}, \\
    \subperm{(\pi^\mathrm{c})}{I} &= h^{-1}_{\pi^\mathrm{c}(I)} \circ
    \pi^\mathrm{c} \circ h_I
    = h^{-1}_{\desc{n}(\pi(I))} \circ \desc{n} \circ \pi \circ h_I \\
    &= h^{-1}_{\desc{n}(\pi(I))} \circ \desc{n} \circ h_{\pi(I)} \circ h_{\pi(I)}^{-1} \circ \pi \circ h_I \\
    &= \subperm{(\desc{n})}{\pi(I)} \circ \subperm{\pi}{I} =
    \desc{\ell} \circ \subperm{\pi}{I}
    = (\subperm{\pi}{I})^\mathrm{c}, \\
    \subperm{(\pi^{-1})}{I} &= h^{-1}_{\pi^{-1}(I)} \circ \pi^{-1}
    \circ h_I
    = (h_I^{-1} \circ \pi \circ h_{\pi^{-1}(I)})^{-1} \\
    &= (h_{\pi(\pi^{-1}(I))}^{-1} \circ \pi \circ
    h_{\pi^{-1}(I)})^{-1} =
    (\subperm{\pi}{\pi^{-1}(I)})^{-1}.
    \qedhere
  \end{align*}
\end{proof}

More interestingly, our formalism reveals that every
$\ell$\hyp{}pattern of a composition of permutations is a composition
of $\ell$\hyp{}patterns of the respective
permutations. (Lemma~\ref{lem:Ppitau-PpiPtau}\eqref{pitau} rephrases
\cite[Lemma~3]{AtkBea} in a slightly generalized way.)

\begin{lemma}
  \label{lem:Ppitau-PpiPtau}
  Let $\pi, \tau \in \symm{n}$, let $\ell \in \nset{n}$, and let $I \in \Snl$. Then the
  following statements hold.
  \begin{enumerate}[\rm (i)]
  \item\label{pitau} $\subperm{(\pi \tau)}{I} = \subperm{\pi}{\tau(I)}
    \circ \subperm{\tau}{I}$.
  \item\label{Ppitau} $\patt{\ell}{\pi \tau} \subseteq
    (\patt{\ell}{\pi}) (\patt{\ell}{\tau})$.
  \end{enumerate}
\end{lemma}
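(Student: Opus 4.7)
For part~\eqref{pitau} the plan is a direct calculation from Definition~\ref{def:hS}: unfold both sides using $\subperm{\sigma}{J} = h^{-1}_{\sigma(J)} \circ \sigma \circ h_J$, exploit the fact that $(\pi\tau)(I) = \pi(\tau(I))$, and insert the identity $h_{\tau(I)} \circ h_{\tau(I)}^{-1}$ between $\pi$ and $\tau$ in the middle of the expression $h_{\pi(\tau(I))}^{-1} \circ \pi \circ \tau \circ h_I$. This produces exactly $(h_{\pi(\tau(I))}^{-1} \circ \pi \circ h_{\tau(I)}) \circ (h_{\tau(I)}^{-1} \circ \tau \circ h_I) = \subperm{\pi}{\tau(I)} \circ \subperm{\tau}{I}$. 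There is no real obstacle here; the identity is forced once one tracks which order-isomorphism belongs where, and the only thing to be careful about is that $\tau(I)$ has $\ell$ elements (which is clear since $\tau$ is a bijection), so $h_{\tau(I)}$ makes sense and the composition lies in $\symm{\ell}$.

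For part~\eqref{Ppitau}, the plan is to bootstrap~\eqref{pitau} via Lemma~\ref{A4}(iii). Let $\sigma \in \patt{\ell}{\pi\tau}$. By Lemma~\ref{A4}(iii) there exists $I \in \Snl$ with $\sigma = \subperm{(\pi\tau)}{I}$, and by~\eqref{pitau} we have $\sigma = \subperm{\pi}{\tau(I)} \circ \subperm{\tau}{I}$. Since $\tau$ is a bijection, $\tau(I) \in \Snl$, so a second application of Lemma~\ref{A4}(iii) gives $\subperm{\pi}{\tau(I)} \in \patt{\ell}{\pi}$ and $\subperm{\tau}{I} \in \patt{\ell}{\tau}$. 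Hence $\sigma$ lies in the elementwise product $(\patt{\ell}{\pi})(\patt{\ell}{\tau})$, proving the claimed inclusion.

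The whole argument is essentially formal and the main (very mild) point requiring attention is the bookkeeping in part~\eqref{pitau}, namely choosing the correct subset $\tau(I)$ at which to split the composition and recognizing that inserting $h_{\tau(I)} h_{\tau(I)}^{-1}$ is exactly what is needed so that each factor matches the definition of a pattern $\subperm{\cdot}{\cdot}$. Once this is observed, part~\eqref{Ppitau} is a one-line consequence, and no additional combinatorial argument (e.g.\ about substrings or reductions) is required because all such content has already been encoded in Lemma~\ref{A4}.
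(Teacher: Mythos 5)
Your proposal is correct and follows essentially the same route as the paper: part~(i) is exactly the paper's calculation of inserting $h_{\tau(I)} \circ h_{\tau(I)}^{-1}$ into $h_{\pi(\tau(I))}^{-1} \circ \pi \circ \tau \circ h_I$, and part~(ii) is the same one-line deduction via Lemma~\ref{A4}(iii). No discrepancies to report.
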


\begin{proof}
  \begin{inparaenum}[\rm (i)]
  \item The identity is verified by straightforward calculation:
    \[
    \subperm{(\pi \tau)}{I} = h_{(\pi \circ \tau)(I)}^{-1} \circ \pi
    \circ \tau \circ h_I = h_{\pi(\tau(I))}^{-1} \circ \pi \circ
    h_{\tau(I)} \circ h_{\tau(I)}^{-1} \circ \tau \circ h_I =
    \subperm{\pi}{\tau(I)} \circ \subperm{\tau}{I}.
    \]

  \item Let $\sigma \in \patt{\ell}{\pi \tau}$. Then there exists $I
    \in \Snl$ such that $\sigma = \subperm{(\pi \tau)}{I}$. By
    part~\eqref{pitau}, we have $\subperm{(\pi \tau)}{I} =
    \subperm{\pi}{\tau(I)} \circ \subperm{\tau}{I}$. Since
    $\subperm{\pi}{\tau(I)} \in \patt{\ell}{\pi}$ and
    $\subperm{\tau}{I} \in \patt{\ell}{\tau}$, we conclude that
    $\sigma \in (\patt{\ell}{\pi}) (\patt{\ell}{\tau})$.
  \end{inparaenum}
\end{proof}

\begin{remark}
  The converse inclusion $(\patt{\ell}{\pi}) (\patt{\ell}{\tau})
  \subseteq \patt{\ell}{\pi \tau}$ does not hold in general, and it is
  easy to find examples where we have a strict inclusion
  $\patt{\ell}{\pi \tau} \subsetneq (\patt{\ell}{\pi})
  (\patt{\ell}{\tau})$. For example, let $\pi = \tau = 132$ and $\ell
  = 2$. Then $\pi \tau = 123$ and $\patt{2}{\pi} = \patt{2}{\tau} =
  \{12, 21\}$, $\patt{2}{\pi \tau} = \{12\}$, $(\patt{2}{\pi})
  (\patt{2}{\tau}) = \{12, 21\}$.
\end{remark}

Using our formalism, it is also easy to prove the well\hyp{}known fact
that the pattern involvement relation is a partial order. Furthermore,
every covering relationship in this order links permutations of two
consecutive lengths.

\begin{lemma}
  \label{lem:trans-between}
  Assume that $\ell \leq m \leq n$.
  \begin{enumerate}[\rm (i)]
  \item If $I \in \nsubl{m}{\ell}$ and $J \in \nsubl{n}{m}$, then $h_J
    \circ h_I = h_{h_J(I)}$.
  \item For all $\sigma \in \symm{\ell}$, $\pi \in \symm{m}$, $\tau
    \in \symm{n}$, it holds that if $\sigma \leq \pi$ and $\pi \leq
    \tau$, then $\sigma \leq \tau$.
  \item For all $\sigma \in \symm{\ell}$, $\tau \in \symm{n}$, it
    holds that if $\sigma \leq \tau$, then there exists $\pi \in
    \symm{m}$ such that $\sigma \leq \pi \leq \tau$.
  \end{enumerate}
\end{lemma}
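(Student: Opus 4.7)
The plan is to prove the three items in sequence; (i) does the geometric work and (ii), (iii) reduce to bookkeeping with compositions of the maps $h_S$ together with Lemma~\ref{A4}(iii).

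For (i), I would argue by uniqueness of order-preserving bijections. Both $h_I \colon \nset{\ell} \to I$ and $h_J \colon \nset{m} \to J$ are order-preserving bijections onto their images, so the composite $h_J \circ h_I$ is an order-preserving bijection from $\nset{\ell}$ onto $h_J(I)$. There is exactly one order-preserving bijection from $\nset{\ell}$ onto any $\ell$-element subset of $\nset{n}$, namely the corresponding $h_S$, so $h_J \circ h_I$ must coincide with $h_{h_J(I)}$.

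For (ii), suppose $\sigma \leq \pi$ and $\pi \leq \tau$. By Lemma~\ref{A4}(iii) fix $I \in \nsubl{m}{\ell}$ with $\sigma = \subperm{\pi}{I}$ and $J \in \nsubl{n}{m}$ with $\pi = \subperm{\tau}{J}$, and set $K := h_J(I) \in \nsubl{n}{\ell}$. Substituting the definitions yields $\sigma = h^{-1}_{\pi(I)} \circ h^{-1}_{\tau(J)} \circ \tau \circ h_J \circ h_I$. Part (i) replaces $h_J \circ h_I$ by $h_K$. The identity $\pi = h^{-1}_{\tau(J)} \circ \tau \circ h_J$ yields $h_{\tau(J)}(\pi(I)) = \tau(h_J(I)) = \tau(K)$, and a second application of (i) to $h_{\tau(J)} \circ h_{\pi(I)}$ gives $h_{\tau(J)} \circ h_{\pi(I)} = h_{\tau(K)}$, hence $h^{-1}_{\pi(I)} \circ h^{-1}_{\tau(J)} = h^{-1}_{\tau(K)}$. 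Combining these rewrites gives $\sigma = \subperm{\tau}{K}$, so $\sigma \leq \tau$ by Lemma~\ref{A4}(iii).

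For (iii), suppose $\sigma \leq \tau$. By Lemma~\ref{A4}(iii) pick $K \in \nsubl{n}{\ell}$ with $\sigma = \subperm{\tau}{K}$. Since $\ell \leq m \leq n$, extend $K$ to some $J \in \nsubl{n}{m}$ with $K \subseteq J \subseteq \nset{n}$, and set $\pi := \subperm{\tau}{J} \in \symm{m}$, so $\pi \leq \tau$. Defining $I := h_J^{-1}(K) \in \nsubl{m}{\ell}$ gives $h_J(I) = K$, and the same computation as in (ii) then yields $\subperm{\pi}{I} = \subperm{\tau}{K} = \sigma$, so $\sigma \leq \pi$. The only step requiring any real thought is the subscript bookkeeping in (ii): the crucial identity $h_{\tau(J)}(\pi(I)) = \tau(K)$ is just the definition of $\pi$ unpacked, but without it the second invocation of (i) cannot be applied, so this is where some care is needed.
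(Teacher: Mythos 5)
Your proposal is correct and follows essentially the same route as the paper: part (i) is the key composition identity (you prove it by uniqueness of the order\hyp{}isomorphism onto $h_J(I)$, the paper checks it elementwise, which is the same idea), and parts (ii) and (iii) are the same subscript bookkeeping via Lemma~\ref{A4}(iii), including the crucial identity $h_{\tau(J)}(\pi(I)) = \tau(h_J(I))$. Your observation that (iii) can reuse the computation from (ii) applied to $I := h_J^{-1}(K)$ is a small economy over the paper, which redoes that calculation explicitly, but it is not a different argument.
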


\begin{proof}
  \begin{inparaenum}[\rm (i)]
  \item For any $i \in \nset{\ell}$, the $i$\hyp{}th smallest element of the set $I$ is $h_I(i)$. The order\hyp{}isomorphism $h_J \colon \nset{m} \to J \subseteq \nset{n}$ maps the $i$\hyp{}th smallest element of $I$ to the $i$\hyp{}th smallest element of $h_J(I)$, which is $h_{h_J(I)}(i)$. Thus, $(h_J \circ h_I)(i) = h_J(h_I(i)) = h_{h_J(I)}(i)$.

  \item Our hypotheses assert that there exist $I \in \nsubl{m}{\ell}$
    and $J \in \nsubl{n}{m}$ such that $\sigma = \subperm{\pi}{I} =
    h_{\pi(I)}^{-1} \circ \pi \circ h_I$ and $\pi = \subperm{\tau}{J}
    = h_{\tau(J)}^{-1} \circ \tau \circ h_J$,
    where $h_I$ and $h_{\pi(I)}$ map $\nset{\ell}$ into $\nset{m}$, and $h_J$ and $h_{\tau(J)}$
    map $\nset{m}$ into $\nset{n}$.
    Observe that
    $h_{\tau(J)} \circ \pi = \tau \circ h_J$. Then, by part (i), we
    have
    \[
    h_{\pi(I)}^{-1} \circ h_{\tau(J)}^{-1} = (h_{\tau(J)} \circ
    h_{\pi(I)})^{-1} = (h_{h_{\tau(J)}(\pi(I))})^{-1} =
    (h_{\tau(h_J(I))})^{-1}.
    \]
    Consequently,
    \[
    \sigma = h_{\pi(I)}^{-1} \circ h_{\tau(J)}^{-1} \circ \tau \circ
    h_J \circ h_I = (h_{\tau(h_J(I))})^{-1} \circ \tau \circ
    h_{h_J(I)} = \subperm{\tau}{h_J(I)},
    \]
    which shows that $\sigma \leq \tau$.

  \item By the assumption that $\sigma \leq \tau$, there exists $I \in
    \Snl$ such that $\sigma = \subperm{\tau}{I} = h_{\tau(I)}^{-1}
    \circ \tau \circ h_I$. Let $J$ be any $m$\hyp{}element subset of
    $\nset{n}$ satisfying $I \subseteq J$, and let $\pi :=
    \subperm{\tau}{J}$. We have $\pi \leq \tau$ by definition, and it
    remains to show that $\sigma \leq \pi$. Consider
    \begin{align*}
      \subperm{\pi}{h_J^{-1}(I)} &= h_{\pi(h_J^{-1}(I))}^{-1} \circ
      \pi \circ h_{h_J^{-1}(I)}
      = h_{\subperm{\tau}{J}(h_J^{-1}(I))}^{-1} \circ \subperm{\tau}{J} \circ h_{h_J^{-1}(I)} \\
      &= h_{\subperm{\tau}{J}(h_J^{-1}(I))}^{-1} \circ
      h_{\tau(J)}^{-1} \circ \tau \circ h_J \circ h_{h_J^{-1}(I)}.
    \end{align*}
    By part (i), we have $h_J \circ h_{h_J^{-1}(I)} =
    h_{h_J(h_J^{-1}(I))} = h_I$, and
    \begin{align*}
      h_{\subperm{\tau}{J}(h_J^{-1}(I))}^{-1} \circ h_{\tau(J)}^{-1}
      &= (h_{\tau(J)} \circ h_{h_{\tau(J)}^{-1} \circ \tau \circ h_J (h_J^{-1}(I))})^{-1} \\
      &= (h_{h_{\tau(J)}(h_{\tau(J)}^{-1} \circ \tau \circ h_J
        (h_J^{-1}(I))})^{-1} = h_{\tau(I)}^{-1}.
    \end{align*}
    Thus, $\subperm{\pi}{h_J^{-1}(I)} = h_{\tau(I)}^{-1} \circ \tau
    \circ h_I = \subperm{\tau}{I} = \sigma$, which shows that $\sigma
    \leq \pi$.
  \end{inparaenum}
\end{proof}


\section{Pattern involvement and Galois \newline connections}
\label{sec:3} 

\subsection*{Galois connections}

A pair $(f,g)$ of maps $f:\Pow(A)\to\Pow(B)$, $g:\Pow(B)\to\Pow(A)$
between the power sets of sets $A$ and $B$ is called an \emph{(antitone) Galois
connection} or a \emph{monotone Galois connection}, respectively, if for all
$X \subseteq A$, $Y \subseteq B$ we
have
\begin{align*}
  X \subseteq g(Y) &\iff  f(X) \supseteq Y \text{ or}\\
  X \subseteq g(Y) &\iff  f(X) \subseteq Y \text{, respectively.}
\end{align*}
We collect some well-known facts. For a Galois connection the mappings $f$ and $g$ are antitone (order-reversing) and both compositions $f \circ g$ and $g \circ f$ are closure operators. 
For a monotone Galois connection the mappings $f$ and $g$ are monotone, $f$ and $g$ are called a \emph{lower} and an \emph{upper adjoint}, resp., and the composition $g \circ f$ is a closure operator while $f \circ g$ is a kernel operator.

Each binary relation $R \subseteq A \times B$ induces an antitone
Galois connection $(f,g)$ as
well as a monotone  Galois connection $(f^{*},g^{*})$ between the power set
lattices $\Pow(A)$ and $\Pow(B)$ via
\begin{align*}
  f(X) &:= \{b \in B \mid \forall a \in X \colon (a,b) \in R\},\\
  g(Y) &:= \{a \in A \mid \forall b \in Y \colon (a,b) \in R\},\\
  f^{*}(X) &:= A\setminus\{b \in B \mid \forall a \in X \colon (a,b) \in R\},\\
  g^{*}(Y) &:= \{a \in A \mid \forall b \in B\setminus Y \colon (a,b) \in R\},
\end{align*}
where $X \subseteq A$ and $Y \subseteq B$. Moreover, each Galois
connection between $\Pow(A)$ and $\Pow(B)$ is induced by a suitable
relation $R \subseteq A \times B$ (note $(a,b) \in R \iff b \in f(\{a\}) \iff b \notin f^{*}(\{a\})$).

Some further properties are mentioned in the following where
we consider a particular monotone Galois connection induced by the
pattern avoidance relation.
For general background and further information on Galois connections, we refer the reader to the book~\cite{DenErnWis}.

\subsection*{The operators $\Patl$ and $\Compn$}

Let $\ell, n \in \IN_+$ with $\ell \leq n$. We say that a permutation
$\tau \in \symm{n}$ is \emph{compatible} with a set $S \subseteq
\symm{\ell}$ of $\ell$\hyp{}permutations if $\patt{\ell}{\tau}
\subseteq S$. For $S \subseteq \symm{\ell}$, $T \subseteq \symm{n}$,
we write
\begin{align*}
  \Compn{S} &:= \{\tau \in \symm{n} \mid \patt{\ell}{\tau} \subseteq S\}, \\
  \Patl{T} &:= \bigcup_{\tau \in T} \patt{\ell}{\tau}.
\end{align*}
Thus, $\Compn{S}$ is the set of all $n$\hyp{}permutations compatible
with $S$, and $\Patl{T}$ is the set of all $\ell$\hyp{}patterns of
permutations in $T$. It is not difficult to verify that
\begin{align*}
  \Compn{S} &= \{\tau \in \symm{n} \mid \forall \sigma \in \symm{\ell} \setminus S \colon \sigma \nleq \tau\}, \\
  \Patl{T} &= \symm{\ell} \setminus \{\sigma \in \symm{\ell} \mid
  \forall \tau \in T \colon \sigma \nleq \tau\}.
\end{align*}
Consequently, $\Patl$ and $\Compn$ are precisely the lower and upper adjoints of the monotone Galois connection between
$\Pow(\symm{\ell})$ and $\Pow(\symm{n})$ induced by the
pattern avoidance relation $\nleq$.
Therefore,
\begin{equation}
\label{CompPatstar}
  \forall S \subseteq \symm{\ell} \,\, \forall T \subseteq \symm{n} \colon
    \Patl{T} \subseteq S \iff T \subseteq \Compn{S}.
\end{equation}
Furthermore, $\Patl{\Compn{}}$ and
$\Compn{\Patl{}}$ are kernel and closure operators, respectively.
The kernels and closures are just all the sets of the form $\Patl{T}$ and
$\Compn{S}$, respectively.
In particular we have
\[
\begin{array}{c@{\qquad}c}
  \Patl{\Compn{S}} \subseteq S, &
\Compn{\Patl{\Compn S}} =\Compn S, \\
  T \subseteq \Compn{\Patl{T}}, &
\Patl{\Compn{\Patl T}} = \Patl T.
\end{array}
\]

The upper adjoint $\Compn{}$ has the following remarkable property, on which the current work builds.

\begin{proposition}
  \label{prop:S-subgroup}
  If $S$ is a subgroup of $\symm{\ell}$, then $\Compn{S}$ is a
  subgroup of $\symm{n}$.
\end{proposition}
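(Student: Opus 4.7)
The plan is to verify the three subgroup axioms directly, each step being an immediate application of lemmas already proved. Since $\Compn{S} \subseteq \symm{n}$ is a subset of a finite group, one could in principle get away with just checking nonemptiness and closure under composition, but I will verify all three axioms since each one illustrates a different aspect of the formalism.

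First I would check that $\asc{n} \in \Compn{S}$, which uses Example~\ref{ex:asc-desc-cycle}: we have $\patt{\ell}{\asc{n}} = \{\asc{\ell}\}$, and $\asc{\ell}$ is the identity of $\symm{\ell}$, hence lies in the subgroup $S$. So $\Compn{S}$ is nonempty and contains the identity. Second, for closure under composition, suppose $\pi, \tau \in \Compn{S}$, so that $\patt{\ell}{\pi} \subseteq S$ and $\patt{\ell}{\tau} \subseteq S$. By Lemma~\ref{lem:Ppitau-PpiPtau}\eqref{Ppitau},
\[
\patt{\ell}{\pi\tau} \subseteq (\patt{\ell}{\pi})(\patt{\ell}{\tau}) \subseteq S \cdot S = S,
\]
where the last equality holds because $S$ is a subgroup. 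Hence $\pi\tau \in \Compn{S}$. This is the heart of the argument and the reason the proposition is true: Lemma~\ref{lem:Ppitau-PpiPtau}\eqref{Ppitau} is exactly the compatibility of pattern formation with composition that the introduction flagged as the surprising link between patterns and groups.

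Third, for closure under inverses, suppose $\pi \in \Compn{S}$ and let $\sigma \in \patt{\ell}{\pi^{-1}}$. Then $\sigma = \subperm{(\pi^{-1})}{I}$ for some $I \in \Snl$. By Lemma~\ref{lem:rev-comp-inv}(iii),
\[
\sigma = \subperm{(\pi^{-1})}{I} = (\subperm{\pi}{\pi^{-1}(I)})^{-1}.
\]
Since $\subperm{\pi}{\pi^{-1}(I)} \in \patt{\ell}{\pi} \subseteq S$ and $S$ is a subgroup, its inverse also lies in $S$, so $\sigma \in S$. Thus $\patt{\ell}{\pi^{-1}} \subseteq S$, i.e., $\pi^{-1} \in \Compn{S}$.

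There is essentially no obstacle here: the substantive work has already been done in Lemma~\ref{lem:Ppitau-PpiPtau} and Lemma~\ref{lem:rev-comp-inv}, and the proof is just a routine check that the definition of $\Compn{S}$ together with the subgroup property $SS = S$, $S^{-1} = S$ gives the three subgroup axioms. The interest of the statement lies not in the difficulty of the proof but in the fact that it links two theories (patterns and groups) that a priori look unrelated.
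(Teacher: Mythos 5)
Your proof is correct and takes essentially the same route as the paper: identity via Example~\ref{ex:asc-desc-cycle}, closure under composition via Lemma~\ref{lem:Ppitau-PpiPtau}, and closure under inverses via Lemma~\ref{lem:rev-comp-inv}(iii). You merely spell out the inverse step in slightly more detail than the paper, which records it as the set identity $\patt{\ell}{\pi^{-1}} = (\patt{\ell}{\pi})^{-1}$; the substance is identical.
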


\begin{proof}
  Assume that $S \leq \symm{\ell}$.
Note that $\Compn{S}$ is nonempty, because $S$ is a group and hence contains the identity, from which it follows, by Example~\ref{ex:asc-desc-cycle}, that $\Compn{S}$ contains the identity.
  Let $\pi, \tau \in
  \Compn{S}$. Thus $\patt{\ell}{\pi}, \patt{\ell}{\tau} \subseteq
  S$. According to Lemmas~\ref{lem:rev-comp-inv}
  and~\ref{lem:Ppitau-PpiPtau}, we have
  \begin{gather*}
    \patt{\ell}{\pi^{-1}} = (\patt{\ell}{\pi})^{-1} := \{\sigma^{-1}
    \mid \sigma \in \patt{\ell}{\pi}\},
    \\
    \patt{\ell}{\pi \tau} \subseteq (\patt{\ell}{\pi})
    (\patt{\ell}{\tau}) = \{\sigma \sigma' \mid \sigma \in
    \patt{\ell}{\pi},\, \sigma' \in \patt{\ell}{\tau}\}.
  \end{gather*}
  Since $S$ is a group, it contains all products and inverses of its
  elements, so we have $\patt{\ell}{\pi^{-1}} \subseteq S$ and
  $(\patt{\ell}{\pi}) (\patt{\ell}{\tau}) \subseteq S$. Consequently,
  $\pi^{-1}, \pi \tau \in \Compn{S}$. This implies that $\Compn{S}$ is
  a subgroup of $\symm{n}$.
\end{proof}

Using the standard terminology of the theory of permutation patterns,
Proposition~\ref{prop:S-subgroup} can be rephrased as follows.

\begin{corollary}
  \label{cor:group-complement}
  The set of $n$\hyp{}permutations avoiding the complement of a
  subgroup of $\symm{\ell}$ is a subgroup of $\symm{n}$.
\end{corollary}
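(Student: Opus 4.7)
The plan is essentially bookkeeping: I would rewrite the statement of the corollary in the notation of Proposition~\ref{prop:S-subgroup} and then invoke the proposition. Let $G \leq \symm{\ell}$, and let $T$ denote the set of $n$-permutations avoiding the complement $\symm{\ell} \setminus G$. Unfolding the definition of $\Av$ given in the preliminaries, $T$ is the set of those $\tau \in \symm{n}$ such that $\sigma \nleq \tau$ for every $\sigma \in \symm{\ell} \setminus G$. The alternative characterization of $\Compn{}$ displayed just above Proposition~\ref{prop:S-subgroup},
\[
\Compn{S} = \{\tau \in \symm{n} \mid \forall \sigma \in \symm{\ell} \setminus S \colon \sigma \nleq \tau\},
\]
applied with $S = G$ identifies $T$ precisely with $\Compn{G}$.

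With this identification noted, Proposition~\ref{prop:S-subgroup} immediately yields that $T = \Compn{G}$ is a subgroup of $\symm{n}$, completing the proof. There is no genuine obstacle here: the corollary adds no mathematical content beyond the proposition, and the only ``work'' is to recognize that the condition $\patt{\ell}{\tau} \subseteq G$ defining membership in $\Compn{G}$ is synonymous, in the standard terminology of permutation patterns, with $\tau$ avoiding every element of $\symm{\ell} \setminus G$.
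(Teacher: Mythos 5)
Your proposal is correct and matches the paper exactly: the paper presents this corollary as a mere rephrasing of Proposition~\ref{prop:S-subgroup}, relying on the same identification of the set of $n$\hyp{}permutations avoiding $\symm{\ell} \setminus G$ with $\Compn{G}$ via the displayed alternative characterization of $\Compn{}$. Nothing further is needed.
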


\subsection*{The operators $\gPatl$ and $\gCompn$}

In view of Proposition~\ref{prop:S-subgroup}, it makes sense to
modify the monotone Galois connection $(\Patl, \Compn)$ into one between
the subgroup lattices $\Sub(\symm{\ell})$ and $\Sub(\symm{n})$ of
the symmetric groups $\symm{\ell}$ and $\symm{n}$. Thus we define
$\gCompn \colon {\!\!}\linebreak \Sub(\symm{\ell}) \to \Sub(\symm{n})$, 
$\gPatl \colon \Sub(\symm{n}) \to \Sub(\symm{\ell})$ just by applying
the corresponding operators $\Compn$ and $\Patl$ and then taking the
generated subgroups. According to
 Proposition~\ref{prop:S-subgroup}, $\Compn{G} \in \Sub(\symm{n})$
 whenever $G \in \Sub(\symm{\ell})$, and we get:
\begin{align*}
  \gCompn{G} &:= \gensg{\Compn{G}} = \Compn{G}
  = \{\tau \in \symm{n} \mid \patt{\ell}{\tau} \subseteq G\}, \\
  \gPatl{H} &:= \gensg{\Patl{H}} = \gensg{\bigcup_{\tau \in H}
    \patt{\ell}{\tau}},
\end{align*}
where $G\in \Sub(\symm{\ell})$ and $H \in \Sub(\symm{n})$.

Let us verify that $({\gPatl}, {\gCompn})$ (or, equivalently,
$({\gPatl}\!, {\Compn})$) is indeed an adjoint pair of a monotone Galois
connection. 

\begin{lemma}\label{A5}
  For all $G \in \Sub(\symm{\ell})$ and $H \in \Sub(\symm{n})$, it
  holds that $\gPatl{H}\! \subseteq G$ if and only if $H \subseteq
  \gCompn{G}$.
\end{lemma}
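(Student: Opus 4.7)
The plan is to reduce this to the basic Galois connection (\ref{CompPatstar}) for arbitrary subsets, which has already been established. The key observation is that $\gComp^{(n)}$ behaves trivially on subgroups, while $\gPat^{(\ell)}$ involves a group closure that becomes transparent when the target is already a subgroup.

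First, I would unwind the definitions. By Proposition~\ref{prop:S-subgroup}, $\Comp^{(n)}(G)$ is already a subgroup of $\symm{n}$ whenever $G \leq \symm{\ell}$, so $\gComp^{(n)}(G) = \langle \Comp^{(n)}(G) \rangle = \Comp^{(n)}(G)$. By definition $\gPat^{(\ell)}(H) = \langle \Pat^{(\ell)}(H) \rangle$.

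Next, I would invoke the general fact that if $G$ is a subgroup of some group and $S$ is any subset, then $\langle S \rangle \subseteq G$ if and only if $S \subseteq G$ (the nontrivial direction follows because $\langle S \rangle$ is by definition the smallest subgroup containing $S$). Applied to $S = \Pat^{(\ell)}(H)$, this gives
\[
\gPat^{(\ell)}(H) \subseteq G \iff \Pat^{(\ell)}(H) \subseteq G.
\]
Combining this with (\ref{CompPatstar}), which asserts $\Pat^{(\ell)}(H) \subseteq G \iff H \subseteq \Comp^{(n)}(G)$, and with the identity $\Comp^{(n)}(G) = \gComp^{(n)}(G)$, yields the desired equivalence.

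There is essentially no obstacle: the statement is a formal consequence of the fact that $\gComp^{(n)}$ collapses to $\Comp^{(n)}$ on subgroups (which is the content of Proposition~\ref{prop:S-subgroup}) together with the universal property of the subgroup generated by a set. The only mild care needed is to note explicitly that the subgroup generator on the $\Pat$ side is harmless when testing containment in a subgroup $G$, which is exactly what permits the chain of equivalences to close up.
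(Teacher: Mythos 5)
Your proposal is correct and follows essentially the same route as the paper: both reduce the claim to the set-level adjunction~\eqref{CompPatstar} via the identity $\gCompn G = \Compn G$ (from Proposition~\ref{prop:S-subgroup}) and the observation that $\gensg{\Patl H} \subseteq G$ is equivalent to $\Patl H \subseteq G$ for a subgroup $G$. The paper merely spells out the two directions of that last equivalence separately instead of citing the universal property of generated subgroups.
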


\begin{proof}
First assume that $\gPatl{H} \subseteq G$. Then it holds that $\Patl H\subseteq G$
and we get $H\subseteq\Compn G=\gCompn G$ 
from equivalence~\eqref{CompPatstar}
and Proposition~\ref{prop:S-subgroup}. Assume then that  $H \subseteq 
\gCompn{G}=\Compn G$. Thus $\Patl H\subseteq G$ by
\eqref{CompPatstar} and 
therefore $\gPatl H= \gensg{\Patl H}\subseteq \gensg{G} = G$.
\end{proof}

The proof of Lemma~\ref{A5} remains valid if $H$ is an arbitrary 
subset of $\symm{n}$. Thus $(\gPatl, \gCompn)$ can be considered also as
a monotone Galois connection between $\Pow(\symm{n})$ and
$\Sub(\symm{\ell})$. In particular, the following lemma holds.

\begin{lemma}\label{lem:R1}
$\gPatl H = \gPatl \gensg{H}$
for $H \subseteq \symm{n}$.
\end{lemma}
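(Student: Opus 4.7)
The plan is to exploit the monotone Galois connection $(\gPatl, \gCompn)$ in its extended form between $\Pow(\symm{n})$ and $\Sub(\symm{\ell})$, whose validity is precisely the content of the remark preceding this lemma. Once that extension is in hand, the statement becomes a short formal consequence.

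First I would dispose of the easy inclusion $\gPatl H \subseteq \gPatl \gensg{H}$: from $H \subseteq \gensg{H}$ it is immediate that $\Patl H \subseteq \Patl \gensg{H}$, and hence the subgroups generated satisfy the same inclusion.

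For the reverse inclusion $\gPatl \gensg{H} \subseteq \gPatl H$, I would set $G := \gPatl H \in \Sub(\symm{\ell})$. By the extended form of Lemma~\ref{A5}, the tautology $\gPatl H \subseteq G$ is equivalent to $H \subseteq \gCompn G$. The crucial step is then Proposition~\ref{prop:S-subgroup}: $\gCompn G = \Compn G$ is a subgroup of $\symm{n}$. Being a subgroup of $\symm{n}$ that contains $H$, it must also contain the subgroup $\gensg{H}$ generated by $H$, giving $\gensg{H} \subseteq \gCompn G$. Feeding this back through Lemma~\ref{A5} in the opposite direction yields $\gPatl \gensg{H} \subseteq G = \gPatl H$, as desired.

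The only point requiring any attention is the fact that the Galois-connection machinery extends from $\Sub(\symm{n})$ to all of $\Pow(\symm{n})$, but as already noted, the proof of Lemma~\ref{A5} carries over verbatim to arbitrary subsets, so this is a mild formality rather than a real obstacle.
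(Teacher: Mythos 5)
Your proof is correct and follows essentially the same route as the paper's: the easy inclusion by monotonicity, and the reverse inclusion by observing that $\gCompn\gPatl H$ is a subgroup containing $H$ (hence containing $\gensg{H}$) and then applying the adjunction. The only cosmetic difference is that you invoke Lemma~\ref{A5} directly in both directions where the paper phrases the same steps via the derived closure-operator identities $H \subseteq \gCompn\gPatl H$ and $\gPatl\gCompn\gPatl H = \gPatl H$.
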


\begin{proof}
Indeed, we have $\gPatl H \subseteq \gPatl \gensg{H}$ by the monotonicity of
\linebreak
$\gPatl$. Furthermore, because
$\gCompn\gPatl$ is the closure operator associated with the monotone
Galois connection $(\gPatl, \gCompn)$ between $\Pow(\symm{n})$ and
$\Sub(\symm{\ell})$, we have $H\subseteq\gCompn\gPatl H$, which implies
$\gensg H \subseteq \linebreak \gensg{\gCompn\gPatl H} = \gCompn\gPatl H$, 
and therefore $\gPatl\gensg{H} \subseteq \linebreak \gPatl\gCompn\gPatl H = \gPatl H$.
\end{proof}

\subsection*{$\ell$\hyp{}pattern subgroups}

Subgroups of $\symm{n}$ of the form $\Compn{S}$ for some subset $S
\subseteq \symm{\ell}$ are called \emph{$\ell$\hyp{}pattern subgroups}
of $\symm{n}$. As we have seen in Proposition~\ref{prop:S-subgroup},
$\Compn{S}$ is a subgroup of $\symm{n}$ whenever $S$ is a subgroup of
$\symm{\ell}$. On the other hand, it is well possible that $\Compn{S}$ is a group even if $S$
is not, and there are subgroups of $\symm{n}$ that are not
$\ell$\hyp{}pattern subgroups for any $\ell < n$.

\begin{example}
  For all $\ell, n \in \IN_+$ with $\ell < n$ and $n \geq 3$, the group
  $\gensg{(1 \; n)} \subseteq \symm{n}$ is not an $\ell$\hyp{}pattern
  subgroup of $\symm{n}$.
  The claim is obvious when $\ell \leq 2$, and it is easy to verify for $\ell = 3$ with the help of Proposition~\ref{prop:small-l} below.
  Assume that $\ell \geq 4$, and suppose, to the contrary, that $\gensg{(1 \;
    n)} = \Compn{S}$ for some $S \subseteq \symm{\ell}$. Then
  \[
  \Compn{\Patl{\gensg{(1 \; n)}}} = \Compn{\Patl{\Compn{S}}} =
  \Compn{S} = \gensg{(1 \; n)},
  \]
  that is, $\gensg{(1 \, n)}$ is a closed set. It is easy to verify
  that
  \begin{gather*}
    \Patl{\gensg{(1 \; n)}} = \{\asc{\ell}, (1 \; \ell), \natcycle{\ell}, \natcycle{\ell}^{-1}\}, \\
    \Compn{\{\asc{\ell}, (1 \; \ell), \natcycle{\ell},
      \natcycle{\ell}^{-1}\}} = \{\asc{n}, (1 \; n), \natcycle{n},
    \natcycle{n}^{-1}\}.
  \end{gather*}
  Thus $\gensg{(1 \; n)} \subsetneq \Compn{\Patl{\gensg{(1 \; n)}}}$,
  a contradiction.
\end{example}

These observations raise the questions which subgroups of $\symm{n}$
are $\ell$\hyp{}pattern subgroups and for which subsets $S \subseteq
\symm{\ell}$, the set $\Compn{S}$ is a group. For small values of
$\ell$, we can provide a conclusive answer.

\begin{proposition}
  \label{prop:small-l}
  Let $n, \ell \in \IN_+$ with $n \geq \ell$ and $\ell \leq 3$, and
  let $S$ be a subset of $\symm{\ell}$. Then $\Compn{S}$ is a subgroup
  of $\symm{n}$ if and only if $S$ is a subgroup of $\symm{\ell}$.
\end{proposition}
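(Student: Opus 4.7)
The forward direction is immediate from Proposition~\ref{prop:S-subgroup}. For the converse, assume $\Compn{S}$ is a subgroup of $\symm{n}$; the aim is to show $S$ is a subgroup of $\symm{\ell}$. The cases $\ell = 1$ and $\ell = 2$ reduce to a short enumeration: for $\ell = 1$ the only subset giving a nonempty $\Compn{S}$ is $S = \{\asc{1}\}$, which is a subgroup; for $\ell = 2$ the only nontrivial check is $S = \{21\}$, which yields $\Compn{S} = \{\desc{n}\}$, failing to be a group since $\desc{n} \circ \desc{n} = \asc{n} \notin \Compn{S}$.

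The principal case is $\ell = 3$. I would first extract three forced constraints on $S$. The identity $\asc{n}$ lies in $\Compn{S}$ and $\patt{3}{\asc{n}} = \{\asc{3}\}$ by Example~\ref{ex:asc-desc-cycle}, so $\asc{3} \in S$. Next, by Lemma~\ref{lem:rev-comp-inv}(iii), for every $\tau \in \Compn{S}$ we have $\tau^{-1} \in \Compn{S}$ and $\patt{3}{\tau^{-1}} = (\patt{3}{\tau})^{-1}$; consequently $\Patl{\Compn{S}}$ is closed under inversion in $\symm{3}$. Finally, Lemma~\ref{lem:Ppitau-PpiPtau}(ii) combined with closure of $\Compn{S}$ under composition yields $(\Patl{\Compn{S}}) \cdot (\Patl{\Compn{S}}) \subseteq S$. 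With these structural tools, I would enumerate the subsets $S \subseteq \symm{3}$ that contain $\asc{3}$ but are not subgroups of $\symm{3}$, and for each one exhibit a concrete witness in $\Compn{S}$ whose inverse or product has a $3$-pattern outside $S$.

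A prototypical witness uses the natural cycle from Example~\ref{ex:asc-desc-cycle}: if $231 \in S$ and $312 \notin S$, then $\natcycle{n} \in \Compn{S}$ since $\patt{3}{\natcycle{n}} = \{\asc{3}, 231\} \subseteq S$, but Lemma~\ref{lem:rev-comp-inv}(iii) shows $\natcycle{n}^{-1}$ has $312$ as a pattern, so $\natcycle{n}^{-1} \notin \Compn{S}$ and $\Compn{S}$ fails to be a group. Dual arguments with $\desc{n}$, $\natcycle{n}^{-1}$, and suitable composites handle the remaining cases; the reverse, complement, and inverse symmetries of Lemma~\ref{lem:rev-comp-inv} reduce the $26$ non-subgroup subsets of $\symm{3}$ containing $\asc{3}$ to a small number of essentially distinct cases. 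The main obstacle is precisely this bookkeeping: each individual case is easy once the right witness is identified, but organising them economically via the available symmetries, and treating uniformly the boundary case $n = \ell = 3$ where patterns coincide with the permutations themselves, requires care. Once this case analysis is complete, the only $S \subseteq \symm{3}$ for which $\Compn{S}$ is a subgroup are the six subgroups of $\symm{3}$.
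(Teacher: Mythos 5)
Your overall plan follows the paper's proof quite closely: dispose of $\ell \le 2$ by direct enumeration, observe that $\asc{3} \in S$ because $\asc{n} \in \Compn{S}$, and then run a case analysis on $S \subseteq \symm{3}$ using concrete witnesses such as $\natcycle{n}$, $\desc{n}$ and transpositions together with the reverse/complement symmetries. The inversion argument and your prototypical witness ($231 \in S$ and $312 \notin S$ force $\natcycle{n} \in \Compn{S}$ but $\natcycle{n}^{-1} \notin \Compn{S}$) are correct and are exactly the kind of steps the paper uses.

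However, your third ``forced constraint'' is not valid as derived. Lemma~\ref{lem:Ppitau-PpiPtau}(ii) gives the inclusion $\patt{\ell}{\pi\tau} \subseteq (\patt{\ell}{\pi})(\patt{\ell}{\tau})$, so from $\pi, \tau \in \Compn{S}$ and $\pi\tau \in \Compn{S}$ you only learn that $\patt{\ell}{\pi\tau} \subseteq S$; this says nothing about the possibly much larger product set $(\patt{\ell}{\pi})(\patt{\ell}{\tau})$, and the remark following that lemma records that the inclusion can be strict. In fact the statement $(\Patl{\Compn{S}})(\Patl{\Compn{S}}) \subseteq S$ is \emph{false} in general: in the lemma following Proposition~\ref{prop:small-l} the paper exhibits, for $\ell \geq 4$, a group $H = \Compn{S}$ with $\card{S} = 4$ and $\gensg{\Patl{H}} = \symm{n-1}$, so the same derivation would prove a false statement there. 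For $\ell = 3$ the containment does hold, but only as a consequence of the proposition you are trying to prove, so using it as a tool would be circular. Any appeal to products must instead be made pointwise: take two concrete elements of $\Compn{S}$, compute the actual $3$\hyp{}patterns of their composite, and check that one lies outside $S$ (this is what the paper does, e.g.\ with $(1\;2)$, $\desc{n}$ and their reverses and complements). Finally, the $26$\hyp{}case bookkeeping you defer is where essentially all of the work lies --- it occupies most of the paper's proof --- so as it stands your argument is a plausible outline resting in part on a broken lemma rather than a complete proof.
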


\begin{proof}
  The claim clearly holds for $S = \emptyset$, and we may assume that $S$ is nonempty.
  Sufficiency follows from Proposition~\ref{prop:S-subgroup}, and we
  only need to show necessity. The case when $\ell = 1$ is trivial:
  all nonempty subsets of $\symm{1}$ are subgroups of $\symm{1}$. It is easy to
  see that the claim holds when $\ell = 2$. The only nonempty subset of
  $\symm{2}$ that is not a subgroup is $\{21\}$, and $\Compn{\{21\}} =
  \{\desc{n}\}$ is not a subgroup of $\symm{n}$. It remains to
  consider the case when $\ell = 3$. Assume that $S \subseteq
  \symm{\ell}$ and $H := \Compn{S}$ is a subgroup of $\symm{n}$. Since
  $H$ is a group, it contains the identity permutation $\asc{n}$;
  therefore $S$ must contain all $3$\hyp{}patterns of $\asc{n}$, of
  which there is only one, namely $\asc{3} = 123$. If $\card{S} = 1$,
  then $S = \{123\}$, so $S$ is a group. Assume now that $\card{S}
  \geq 2$. We need to consider several cases.

  \begin{inparaenum}[\it {Case} 1.]
  \item\label{case:321} Assume that $321 \in S$. If $\card{S} = 2$,
    then $S = \{123, 321\} = \{\asc{3}, (1 \; 3)\}$, so $S$ is a subgroup of
    $\symm{\ell}$. Let us assume that $\card{S} \geq 3$. The
    descending permutation $\desc{n}$ is in $H$, because
    $\patt{3}{\desc{n}} = \{321\}$. Consequently, $H$ contains the
    reverses and complements of its members. Observe that the
    permutations
    \begin{align*}
      & (1 \; 2) = 213 \dots n, &
      & (1 \; 2)^\mathrm{r} = n (n-1) \dots 3 1 2, \\
      & (1 \; 2)^\mathrm{c} = (n-1) n (n-2) (n-3) \dots 1, & & (1 \;
      2)^\mathrm{rc} = 12 \dots (n-2) n (n-1)
    \end{align*}
    can be obtained from each other by taking reverses and
    complements, and
    \begin{align*}
      & \patt{3}{(1 \; 2)} = \{123, 213\}, &
      & \patt{3}{(1 \; 2)^\mathrm{r}} = \{321, 312\}, \\
      & \patt{3}{(1 \; 2)^\mathrm{c}} = \{321, 231\}, & & \patt{3}{(1
        \; 2)^\mathrm{rc}} = \{123, 132\}.
    \end{align*}
    Since $\{123, 321\} \subseteq S$ and $\{132, 213, 231, 312\} \cap
    S \neq \emptyset$, one of $\patt{3}{(1 \; 2)}$, $\patt{3}{(1 \;
      2)^\mathrm{r}}$, $\patt{3}{(1 \; 2)^\mathrm{c}}$, $\patt{3}{(1
      \; 2)^\mathrm{rc}}$ is included in $S$. Therefore $H$ contains
    one of $(1 \; 2)$, $(1 \; 2)^\mathrm{r}$, $(1 \; 2)^\mathrm{c}$,
    $(1 \; 2)^\mathrm{rc}$ and hence it contains all of them. It
    follows that $\symm{3} \subseteq \Patl[3]{H} \subseteq S \subseteq
    \symm{3}$, that is, $S = \symm{3}$, so $S$ is a subgroup of
    $\symm{3}$.

  \item\label{case:231-312} Assume that $\{231, 312\} \cap S \neq
    \emptyset$. Since
    \[
    \patt{3}{23 \dots n1} = \{123, 231\}, \qquad \patt{3}{n12 \dots
      (n-1)} = \{123, 312\},
    \]
    we have that $\natcycle{n} = 23 \dots n1 \in H$ or
    $\natcycle{n}^{-1} = n12 \dots (n-1) \in H$. Since $H$ is a group,
    it follows that $\gensg{\natcycle{n}} \subseteq H$; hence, in
    fact, both $\natcycle{n}$ and $\natcycle{n}^{-1}$ are in $H$, so
    $\{123, 231, 312\} \subseteq \Patl{H} \subseteq S$. If $S = \{123,
    231, 312\}$, then $S$ is a subgroup of $\symm{3}$. Let us assume
    that $\card{S} \geq 4$; then $\{132, 213, 321\} \cap S \neq
    \emptyset$. We have already dealt with the case when $321 \in S$
    (Case~\ref{case:321} above), so we may assume that $\{132, 213\}
    \cap S \neq \emptyset$. Since
    \[
    \patt{3}{(1 \; 2)} = \{123, 213\}, \quad \patt{3}{(n-1 \;\; n)} =
    \{123, 132\},
    \]
    $H$ contains $(1 \; 2)$ or $(n-1 \;\; n)$. Therefore $H$ includes
    $\{\natcycle{n}, (1 \; 2)\}$ or $\{\natcycle{n}, (n-1 \;\; n)\}$;
    these are generating sets of $\symm{n}$, so $H =
    \symm{n}$.
Consequently, $S = \symm{3}$.

  \item\label{case:132-213} Assume that $\{132, 213\} \cap S \neq
    \emptyset$. If $\card{S} = 2$, then $S = \{123, 132\}$ or $S =
    \{123, 213\}$, so $S$ is a subgroup of $\symm{3}$. Let us assume
    that $\card{S} \geq 3$. We have already dealt with the cases when
    $\{321, 231, 312\} \cap S \neq \emptyset$ (Cases~\ref{case:321},
    \ref{case:231-312} above), so we may assume that $\{132, 213\}
    \subseteq S$. Then all adjacent transpositions $(i \;\; i+1)$ ($1
    \leq i \leq n-1$) are in $H$, because $\patt{3}{(i \;\; i + 1)}
    \subseteq \{123, 132, 213\} \subseteq S$. These transpositions
    generate the symmetric group $\symm{n}$, so $H =
    \symm{n}$.
Consequently, $S = \symm{3}$.
  \end{inparaenum}

  The cases considered above exhaust all possibilities. This concludes
  the proof.
\end{proof}

It is easy to determine the groups $\Compn{G}$ for each subgroup $G$ of $\symm{3}$; these are
summarized in Table~\ref{table:symm3}.

\begin{table}
  \begin{center}
    \begin{tabular}{ll}
      \toprule
      $G$ & $\Compn{G}$ \\
      \midrule
      $\emptyset$ & $\emptyset$ \\
      $\{123\}$ & $\{12 \dots n\}$ \\
      $\{123, 132\}$ & $\gensg{(n-1 \; n)}$ \\
      $\{123, 213\}$ & $\gensg{(1 \; 2)}$ \\
      $\{123, 321\}$ & $\gensg{\desc{n}}$ \\
      $\{123, 231, 312\}$ & $\gensg{\natcycle{n}}$ \\
      $\symm{3}$ & $\symm{n}$ \\
      \bottomrule
    \end{tabular}
  \end{center}

  \bigskip
  \caption{Permutation groups compatible with subgroups of $\symm{3}$.}
  \label{table:symm3}
\end{table}

In Proposition~\ref{prop:small-l}, the hypothesis $\ell \leq 3$ is
indispensable. In fact, as the following lemma and examples
illustrate, counterexamples can be found when $\ell \geq 4$.

\begin{lemma}
  Assume that $\ell \geq 4$ and $n = \ell + 1$. Then there exists a
  group $H \leq \symm{n}$ such that $H = \Compn{S}$ for some subset $S
  \subseteq \symm{\ell}$, but there is no subgroup $G \leq
  \symm{\ell}$ such that $H = \Compn{G}$.
\end{lemma}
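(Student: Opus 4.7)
I reduce the lemma to exhibiting a Galois\hyp{}closed group $H \leq \symm{n}$ (i.e., $H = \Compn \Patl H$) for which $\langle \Patl H \rangle$ strictly contains $\Patl H$, and moreover $\Compn \langle \Patl H \rangle$ strictly contains $H$. This suffices because of the following criterion: if $H = \Compn G$ for some subgroup $G \leq \symm{\ell}$, then $\Patl H = \Patl \Compn G \subseteq G$ by the kernel property of the Galois connection, so $\langle \Patl H \rangle \subseteq G$, and the sandwich
\[
H = \Compn G \subseteq \Compn \langle \Patl H \rangle \subseteq \Compn \Patl H = H
\]
forces $H = \Compn \langle \Patl H \rangle$. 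Any $H$ violating this last equality cannot be of the form $\Compn G$ for a subgroup~$G$.

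\paragraph{Explicit example for $\ell = 4$.}
I propose $H := \gensg{(1\;3)(2\;4)} = \{12345,\, 34125\} \leq \symm{5}$. Deleting each of the five positions of $34125$ in turn and reducing yields
\[
\Patl H = \{1234,\, 3124,\, 2314,\, 3412\},
\]
which in cycle notation is $\{\mathrm{id},\, (1\;3\;2),\, (1\;2\;3),\, (1\;3)(2\;4)\}$. This set is not closed under composition — for instance $(1\;2)(3\;4) = 2143$ lies in $A_4 \setminus \Patl H$ — but the two $3$\hyp{}cycles already generate $A_3$ and adjoining the even permutation $(1\;3)(2\;4)$ yields $\langle \Patl H \rangle = A_4$. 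The descending permutation $\desc{5} = 54321$ has $\Patl \desc{5} = \{4321\} = \{(1\;4)(2\;3)\} \subseteq A_4$, so $\desc{5} \in \Compn A_4 \setminus H$. Thus $\Compn \langle \Patl H \rangle \supsetneq H$, provided $H$ is Galois\hyp{}closed.

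\paragraph{Closure verification and generalization.}
The real work is to check $H = \Compn \Patl H$. I would proceed by analyzing, for $\tau \in \Compn \Patl H$, the position $j$ at which $\tau$ takes the value~$5$. Because the patterns $1234$, $3124$, $2314$ all place the maximum rank $4$ at the last position of the $4$\hyp{}substring while $3412$ places it at the second, simultaneously considering several $4$\hyp{}subsets $I$ containing position~$5$ rules out $j \in \{1,2,3,4\}$, forcing $\tau_5 = 5$. Then the pattern corresponding to $I = [4]$ forces $\tau|_{[4]}$ to be one of the four elements of $\Patl H$, and the remaining $4$\hyp{}subset constraints (those of the form $I = [5]\setminus\{i\}$ with $i\in[4]$) eliminate all candidates except $12345$ and $34125$. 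For $\ell \geq 5$ an analogous construction works; for instance when $\ell = 5$ one takes $H = \gensg{(1\;4)(2\;5)(3\;6)} \leq \symm{6}$, whose pattern set $\{12345, 45123, 34512\}$ generates the cyclic group $\gensg{\natcycle{5}}$, and the witness $\natcycle{6}$ lies in $\Compn \gensg{\natcycle{5}} \setminus H$. For general $\ell \geq 4$ a ``half\hyp{}shift'' involution in $\symm{n}$ plays the role of the non\hyp{}trivial generator of $H$, with the Galois closure generating a strictly larger cyclic or alternating group and a rotation or reversal serving as the witness. The main obstacle in every case is the closure verification $H = \Compn \Patl H$, a finite combinatorial check driven by the sparse structure of $\Patl H$ and by Lemma~\ref{lem:Ppitau-PpiPtau}.
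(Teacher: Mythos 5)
Your reduction is sound and is essentially the argument the paper itself uses: if $H = \Compn G$ for a subgroup $G$, then $\Patl H \subseteq G$, hence $\gensg{\Patl H} \subseteq G$ and the sandwich forces $H = \Compn \gensg{\Patl H}$; so it suffices to produce $H$ with $H = \Compn{\Patl H}$ but $\Compn\gensg{\Patl H} \neq H$. Your $\ell = 4$ instance is correct and checkable: for $H = \{12345, 34125\}$ one has $\Patl H = \{1234, 3124, 2314, 3412\}$, the position\hyp{}of\hyp{}the\hyp{}maximum analysis does force $\tau_5 = 5$ for every $\tau \in \Compn{\Patl H}$, and the two remaining candidates $31245$ and $23145$ are excluded because each involves $2134 \notin \Patl H$; moreover $\gensg{\Patl H} = A_4$ and $\desc{5} \in \Compn{A_4} \setminus H$. (The paper instead takes $H = \gensg{(1\;n)^{\mathrm r}}$, shows $\gensg{\Patl H} = \symm{n-1}$, and uses $\Compn{\symm{n-1}} = \symm{n} \neq H$ as the witness.)

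The genuine gap is the case of general $\ell \geq 4$. The lemma quantifies over all $\ell \geq 4$, so you need one construction with one proof valid for every $\ell$; a ``finite combinatorial check'' performed separately for each $\ell$ is an infinite amount of work, not a proof. Your proposed general recipe is moreover not pinned down: the $\ell = 4$ witness $34125$ is a shift by $2$ on $\nset{4}$ fixing $n = 5$, whereas the $\ell = 5$ witness $456123$ is a shift by $3$ on all of $\nset{6}$; these follow different formulas depending on the parity of $n$, the resulting sets $\Patl H$ have different sizes and structure (four elements versus three), the group $\gensg{\Patl H}$ changes nature ($A_4$, then $\gensg{\natcycle{5}}$, then for $\ell = 6$ a group containing odd permutations), and the witness permutation changes accordingly. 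None of the three essential facts --- the computation of $\Patl H$, the closure $\Compn{\Patl H} = H$, and the existence of an element of $\Compn\gensg{\Patl H} \setminus H$ --- is established uniformly in $\ell$. To complete the argument you must fix a single family (the paper's $\gensg{(1\;n)^{\mathrm r}}$ works, and its four $\ell$\hyp{}patterns admit a uniform description), compute its patterns once for general $n = \ell + 1$, and carry out the closure and non\hyp{}representability arguments for general $n$, with whatever small\hyp{}$n$ case distinctions are needed (the paper requires one at $n = 5$).
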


\begin{proof}
  Let $H := \gensg{(1 \; n)^\mathrm{r}} = \{12 \dots n, 1 (n-1) (n-2)
  \dots 2 n\}$ and $S := \Patl{H}$. Then $S$ comprises the following
  four permutations:
  \begin{itemize}
  \item $12 \dots (n-1) = \asc{n-1}$,
  \item $1 (n-1) (n-2) \dots 2 = (1 \; 2 \; \cdots \; n-1)^\mathrm{r}
    = [(1 \; 2 \; \cdots \; n-1)^{-1}]^\mathrm{c}$,
  \item $(n-2) (n-3) \dots 1 (n-1) = [(1 \; 2 \; \cdots \;
    n-1)^{-1}]^\mathrm{r} = (1 \; 2 \; \cdots \; n-1)^\mathrm{c}$,
  \item $1 (n-2) (n-3) \dots 2 (n-1) = (1 \; n-1)^\mathrm{r} = (1 \;
    n-1)^\mathrm{c}$.
  \end{itemize}

  First, let us verify that $\Compn{S} = H$. Let $\pi \in
  \Compn{S}$. Then $\patt{n-1}{\pi} \subseteq S$, by
  definition. Consider first the case when $12 \dots (n-1) \leq
  \pi$. Then $321 \nleq \pi$, so $1 (n-1) (n-2) \dots 2$ and $(n-2)
  (n-3) \dots 1 (n-1)$ cannot be $(n-1)$\hyp{}patterns of $\pi$,
  because $321$ is involved in both and pattern involvement is a transitive relation (see Lemma~\ref{lem:trans-between}).
  Similarly, if $n \geq 6$, then $1 (n-2) (n-3) \dots 2 (n-1)$ cannot be an
  $(n-1)$\hyp{}pattern of $\pi$. If $n = 5$, then it may be possible that
  both $1234$ and $1324$ are patterns of $\pi$; in this case $\pi$ is
  either $12435$ or $13245$. But we have $1243 \leq 12435$ and $2134
  \leq 13245$, yet $1243 \notin S$ and $2134 \notin S$. We conclude that the only
  $(n-1)$\hyp{}pattern of $\pi$ is $12 \dots (n-1)$, which implies
  that $\pi = 12 \dots n$.

  Consider then the case when $12 \dots (n-1) \nleq \pi$. Observe
  first that every permutation $\tau = \tau_1 \tau_2 \dots \tau_{n-1} \in S \setminus \{12 \dots
  (n-1)\}$ satisfies $\tau_2 > \tau_3$. Thus, from the fact that
  $\subperm{\pi}{\{1, \dots, n-1\}} \in S \setminus \{12 \dots
  (n-1)\}$, we get $\pi_2 > \pi_3$. At the same time, the only
  permutation $\tau \in S$ satisfying $\tau_1 > \tau_2$ is $(n-2)
  (n-3) \dots 1 (n-1)$. This implies that $\subperm{\pi}{\{2, \dots,
    n\}} = (n-2) (n-3) \dots 1 (n-1)$, so $\pi_n > \pi_2 > \pi_3 >
  \dots > \pi_{n-1}$. The only permutation $\tau \in S$ satisfying
  $\tau_2 > \tau_{n-1}$ is $1 (n-1) (n-2) \dots 2$, which implies that
  $\subperm{\pi}{\{1, \dots, n\}} = 1 (n-1) (n-2) \dots 2$, so
  $\pi_{n-1} > \pi_1$. Putting together the above inequalities
  involving $\pi_i$'s, we get $\pi = 1 (n-1) (n-2) \dots 2 n$. We
  conclude that indeed $\Compn{S} = H$.

  We still need to show that there is no subgroup $G \leq \symm{\ell}$
  such that $\Compn{G} = H$. Suppose, to the contrary, that $\Compn{G}
  = H$ for a subgroup $G \leq \symm{\ell}$. Then $S = \Patl{H}
  \subseteq G$, so $\gensg{S} \subseteq \gensg{G} = G$.

  The group $\gensg{S}$ contains
  \begin{itemize}
  \item $(1 \; n-1)^\mathrm{r} \circ (1 \; 2 \; \cdots \;
    n-1)^\mathrm{c} = (1 \; n-1) \circ \desc{n-1} \circ \desc{n-1}
    \circ (1 \; 2 \; \cdots \; n-1) = (1 \; n-1) \circ (1 \; 2 \;
    \cdots \; n-1) = (1 \; 2 \; \cdots \; n-2)$,
  \item $(1 \; 2 \; \cdots \; n-1)^\mathrm{r} \circ (1 \;
    n-1)^\mathrm{c} = (1 \; 2 \; \cdots \; n-1) \circ \desc{n-1} \circ
    \desc{n-1} \circ (1 \; n-1) = (1 \; 2 \; \cdots \; n-1) \circ (1
    \; n-1) = (2 \; 3 \; \cdots \; n-1)$,
  \item $(2 \; 3 \; \cdots \; n-1)^{-1} \circ (1 \; 2 \; \cdots \;
    n-1) = (1 \; n-1)$,
  \item $(1 \; n-1) \circ (1 \; n-1)^\mathrm{r} = (1 \; n-1) \circ (1
    \; n-1) \circ \desc{n-1} = \desc{n-1}$,
  \item $(1 \; 2 \; \cdots \; n-1)^\mathrm{r} \circ \desc{n-1} = (1 \;
    2 \; \cdots \; n-1) \circ \desc{n-1} \circ \desc{n-1} = (1 \; 2 \;
    \cdots \; n-1)$.
  \end{itemize}
  Since $\gensg{S}$ contains $\{(1 \; n-1), (1 \; 2 \; \cdots \;
  n-1)\}$, a generating set of $\symm{n-1}$, we conclude that
  $\gensg{S} = \symm{n-1}$, so $G = \symm{n-1}$. But
  $\Compn{\symm{n-1}} = \symm{n} \neq H$, a contradiction.
\end{proof}

\begin{example}
  For $n = 6$, $\ell = 4$, the authors have verified with computer
  (using the GAP algebra system)
  that a subgroup $H \leq \symm{n}$ is of the form $\Compn{S}$ for
  some subset $S \subseteq \symm{\ell}$ but not of the form
  $\Compn{G}$ for any subgroup $G \leq \symm{\ell}$ if and only if $H$
  is one of the following:
 \begin{center}
$\gensg{(2 \; 5)}$,
\quad
$\gensg{(2 \; 4)(3 \; 5)}$,
\quad
$\gensg{(2 \; 5)(3 \; 6)}$,
\quad
$\gensg{(1 \; 3)(4 \; 6)}$,
\quad
$\gensg{(1 \; 4)(2 \; 5)}$,
\quad
$\gensg{(1 \; 5)(2 \; 6)}$,
\quad
$\gensg{(1 \; 4 \; 5)(2 \; 3 \; 6)}$,
\quad
$\gensg{(2 \; 4)(3 \; 5),(1 \; 6)(2 \; 3)(4 \; 5)}$.
  \end{center}
\end{example}

\begin{example}
  For $n = 7$, $\ell = 4$, examples of subgroups of $\symm{n}$ that
  are of the form $\Compn{S}$ for some subset $S \subseteq
  \symm{\ell}$ but not of the form $\Compn{G}$ for any subgroup $G
  \leq \symm{\ell}$ include the following:
  \begin{center}
   $\gensg{(2 \; 5)(3 \; 6)}$,
   \quad
   $\gensg{(2 \; 6)(3 \; 7)}$,
   \quad
   $\gensg{(1 \; 5)(2 \; 6)}$.
  \end{center}

  For $n = 8$, $\ell = 4$, examples of subgroups of $\symm{n}$ that
  are of the form $\Compn{S}$ for some subset $S \subseteq
  \symm{\ell}$ but not of the form $\Compn{G}$ for any subgroup $G
  \leq \symm{\ell}$ include the following:
  \begin{center}
  $\gensg{(2 \; 6)(3 \; 7)}$.
  \end{center}

  Note that the above lists may not be exhaustive.
\end{example}


\section{On the monotone Galois connection \newline $\boldsymbol{(\Pat,\Comp)}$}
\label{sec:4}

The $\ell$\hyp{}pattern subgroups of $\symm{n}$ are those subgroups that are of the form $\Compn S$ for some subset $S \subseteq \symm{\ell}$.
As a way of describing such subgroups, we make use of another, classical Galois connection, namely the Galois connection $(\Aut, \Inv)$ between permutations and relations.
Recall that a $k$\hyp{}ary relation on a set $A$ is simply
a subset of $A^k$. Denote by $\Rel_n^{(k)}$ the set of all
$k$\hyp{}ary relations on $\nset{n}$, and let $\Rel_n := \bigcup_{k
  \geq 1} \Rel_n^{(k)}$.

Let $\pi \in \symm{n}$, and let $\rho \in \Rel_n$. We say that the
permutation $\pi$ \emph{preserves} the relation $\rho$, or that $\rho$
is an \emph{invariant} of $\pi$, or that $\pi$ is an
\emph{automorphism} of $\rho$, and we write $\pi \preserves \rho$,
if $\pi(\vect{r}) \in \rho$ for every $\vect{r} \in \rho$. The
preservation relation induces the Galois connection $(\Aut, \Inv)$, where
\begin{align*}
  \Aut R &:= \{\pi \in \symm{n} \mid \text{$\pi \preserves \rho$ for all $\rho \in R$}\}, \\
  \Inv S &:= \{\rho \in \Rel_n \mid \text{$\pi \preserves \rho$ for all $\pi \in S$}\},
\end{align*}
for every $S \subseteq \symm{n}$ and $R \subseteq \Rel_n$.
Thus, $\Aut R$ is the automorphism group of $R$. Write $\Inv^{(\ell)}
S := \Inv S \cap \Rel_n^{(\ell)}$.

It is well known that finite permutation groups are precisely the Galois
closures of the Galois connection $(\Aut, \Inv)$ (see, e.g., Chapter~8
in~\cite{PosKal}). A permutation group $H\leq\symm{n}$ is
\emph{$\ell$\hyp{}closed}, if it is the automorphism group of its
$\ell$\hyp{}ary invariant relations, i.e., $H = \Aut \Inv^{(\ell)} H$.

Let $H \leq \symm{n}$, and let $\vect{a} = (a_1, \dots, a_\ell) \in
\nset{n}^\ell_{\neq}$. Let
\[
\vect{a}^H := \{\sigma(\vect{a}) \mid \sigma \in H\} = \{(\sigma(a_1),
\dots, \sigma(a_\ell)) \mid \sigma \in H\}.
\]
A set of the form $\vect{a}^H$ for some $\vect{a} \in
\nset{n}^\ell_{\neq}$ is called an \emph{$\ell$\hyp{}orbit} of
$H$. For $I \in \Snl$, recall the map $h_I \colon \nset{\ell} \to I$ from
Definition~\ref{def:hS} and view it as a tuple
$h_I \in \nsetSnl$. Therefore it makes sense to consider the
$\ell$\hyp{}orbit $(h_I)^{H}$.

For $m \in \IN_+$, any group $G \leq \symm{m}$ can be viewed as an $m$\hyp{}ary irreflexive relation on the set $\nset{m}$ (i.e., a subset of $\nset{m}^m_{\neq}$) whose members are the permutations of $G$ viewed as tuples.
We denote this relation by $\groupasrel{G}$.
(Formally $\groupasrel{G} = G$, but we prefer to introduce the notation $\groupasrel{G}$ in order to avoid the expression $\Aut G$, because automorphisms of a group $G$ have another fixed meaning.)
It holds that $\groupasrel{G} = (1, \dots, m)^G = (h_{\nset{m}})^G$, where $h_{\nset{m}} \colon \nset{m} \to \nset{m}$ is just the identity map on $\nset{m}$.
Furthermore, the equality $G = \Aut \groupasrel{G}$ holds.

\begin{proposition}
  \label{prop:l-closed}
  Let $H \leq \symm{n}$, and assume that $H = \Compn{S}$ for some
  subset $S \subseteq \symm{\ell}$ \textup{(}not necessarily a
  subgroup\textup{)}. Then $H$ is $\ell$\hyp{}closed, i.e., $H$ is
  determined by its $\ell$\hyp{}ary invariant relations:
  \[
  H = \Aut \Inv H = \Aut \Inv^{(\ell)} H.
  \]
  In particular, the $\ell$\hyp{}orbits $(h_I)^H$ are enough to
  characterize the group:
  \[
  H = \Aut \{(h_I)^H \mid I \in \Snl\}.
  \]
\end{proposition}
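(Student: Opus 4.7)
The plan is to establish the chain of inclusions
\[
H \subseteq \Aut \Inv H \subseteq \Aut \Inv^{(\ell)} H \subseteq \Aut \{(h_I)^H \mid I \in \Snl\} \subseteq H,
\]
which forces all four sets to coincide and simultaneously yields both claimed equalities. The first inclusion is just the general fact that $\Aut \Inv$ is the closure operator of the $(\Aut,\Inv)$ Galois connection. The middle two inclusions follow from the antitonicity of $\Aut$, since $\{(h_I)^H \mid I \in \Snl\} \subseteq \Inv^{(\ell)} H \subseteq \Inv H$ (the orbits $(h_I)^H$ are $\ell$-ary invariants of $H$ by construction). Thus everything reduces to proving the last, nontrivial inclusion: if $\sigma \in \symm{n}$ preserves every relation of the form $(h_I)^H$, then $\sigma \in H$.

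Given such a $\sigma$, I would use the hypothesis $H = \Compn S$ and aim to show $\patt{\ell}{\sigma} \subseteq S$, which would give $\sigma \in \Compn S = H$. So fix any $I \in \Snl$; the goal is $\subperm{\sigma}{I} \in S$. The key small observation is that $h_I$ itself lies in $(h_I)^H$, because the identity $\asc{n}$ belongs to $H$ (groups contain the identity, and $\asc{n} \circ h_I = h_I$). Since $\sigma$ preserves $(h_I)^H$, we get $\sigma \circ h_I \in (h_I)^H$, which means there exists $\pi \in H$ such that $\sigma \circ h_I = \pi \circ h_I$ as $\ell$-tuples.

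By Lemma~\ref{A4}(i), this equation reads $\substring{\sigma}{I} = \substring{\pi}{I}$, so the two substrings are identical and hence have the same reduction. Applying Lemma~\ref{A4}(ii)/(iii), this gives
\[
\subperm{\sigma}{I} = \red(\substring{\sigma}{I}) = \red(\substring{\pi}{I}) = \subperm{\pi}{I}.
\]
Because $\pi \in H = \Compn S$, we have $\subperm{\pi}{I} \in S$, and therefore $\subperm{\sigma}{I} \in S$. Since $I$ was arbitrary, this shows $\patt{\ell}{\sigma} \subseteq S$ and thus $\sigma \in H$, completing the chain.

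I do not expect a real obstacle here; the proof is essentially a compatibility check between the two Galois connections. The only mildly delicate point is recognizing that the automorphism condition on the single orbit $(h_I)^H$ is precisely strong enough to force agreement of $\sigma$ with some element of $H$ on the set $I$, which is exactly what is needed to transfer the $\ell$-pattern of $\sigma$ at position $I$ into $S$. Once that translation between tuple preservation and substring equality is noted, the rest is immediate.
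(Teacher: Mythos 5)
Your proof is correct and follows essentially the same route as the paper: both reduce everything to the inclusion $\Aut \{(h_I)^H \mid I \in \Snl\} \subseteq H$, and both establish it by observing that $h_I \in (h_I)^H$, so $\sigma \circ h_I = \pi \circ h_I$ for some $\pi \in H$, whence $\subperm{\sigma}{I} = \subperm{\pi}{I} \in S$. Your explicit chain through $\Aut \Inv H$ and $\Aut \Inv^{(\ell)} H$ merely spells out what the paper leaves implicit.
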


\begin{proof}
  Clearly, $H \subseteq \Aut (h_I)^H$ for each $I \in \Snl$. Thus, $H
  \subseteq H'$, where $H' := \Aut \{(h_I)^H \mid I \in \Snl\}$. In
  order to prove the converse inclusion $H' \subseteq H$, it suffices
  to show that $\patt{\ell}{\tau} \subseteq S$ for every $\tau \in
  H'$. Let $\tau \in H'$ and $I \in \Snl$, and consider the pattern
  $\subperm{\tau}{I}$. Since $h_I \in (h_I)^H$ and $\tau \in \Aut
  (h_I)^H$, we have $\tau \circ h_I \in (h_I)^H$, i.e., $\tau \circ
  h_I = \pi \circ h_I$ for some $\pi \in H$. Therefore $\tau(I) =
  \pi(I)$, and since all $\ell$\hyp{}patterns of $\pi$ are in $S$, we
  have $\subperm{\tau}{I} = h_{\tau(I)}^{-1} \circ \tau \circ h_I =
  h_{\pi(I)}^{-1} \circ \pi \circ h_I = \subperm{\pi}{I} \in S$.
\end{proof}

In the following theorem, we describe the $\ell$\hyp{}pattern
subgroups of $\symm{n}$ as automorphism groups of relations
of a certain prescribed form. This theorem provides
a description of all groups of the form $\Compn S$, where $S$ is an
arbitrary subset of $\symm{\ell}$.
Those $\ell$\hyp{}pattern subgroups that are of the form $\Compn G$ for some subgroup $G$ of $\symm{\ell}$ have an even simpler description, which we will discuss in the next section (Theorem~\ref{thm:CompPat}).

\begin{theorem}
  \label{thm:Comp-Aut-arbitrary-S}
  Let $H \leq \symm{n}$, and consider the $\ell$\hyp{}orbits $\rho_I
  := (h_I)^H$ for all $I \in \Snl$. Then $H$ is of the form $H =
  \Compn{S}$ for some $S \subseteq \symm{\ell}$ if and only if
  \begin{enumerate}[\rm (a)]
  \item\label{CAAS:a} $H = \Aut \{\rho_I \mid I \in \Snl\}$,
  \item\label{CAAS:b} the $\rho_I$ satisfy the following property: for
    every $x \in \nset{n}^n_{\neq}$ we have
    \begin{multline*}
    \bigl( \forall I \in \Snl \, \exists J \in \Snl \colon
    \red(\substring{x}{I}) \in \red(\rho_J) \bigr) 
    \\
    \implies \forall I
    \in \Snl \colon \substring{x}{I} \in \rho_I.
    \end{multline*}
  \end{enumerate}
\end{theorem}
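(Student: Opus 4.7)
The plan is to handle the two implications separately, with $S := \Patl{H}$ as the natural candidate throughout. For the forward direction, assume $H = \Compn{S}$. Condition (a) is immediate from Proposition~\ref{prop:l-closed}. For (b), observe via Lemma~\ref{A4}(ii) that $\red(\rho_J) = \{\subperm{\sigma}{J} \mid \sigma \in H\}$, so $\bigcup_{J \in \Snl} \red(\rho_J) = \Patl{H}$. The hypothesis of (b) for a permutation $x \in \nset{n}^n_{\neq}$ therefore amounts to $\patt{\ell}{x} \subseteq \Patl{H}$. Combining this with the closure identity $\Patl\Compn{S} \subseteq S$ recorded in Section~\ref{sec:3} gives $x \in \Compn{S} = H$, and consequently $\substring{x}{I} = x h_I \in H h_I = \rho_I$ for every $I \in \Snl$, which is the conclusion of (b).

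For the reverse direction, assume (a) and (b), set $S := \Patl{H}$, and show $H = \Compn\Patl{H}$. The inclusion $H \subseteq \Compn\Patl{H}$ is the general closure property. For the converse, let $\tau \in \Compn\Patl{H}$; the same computation as in the forward direction verifies the hypothesis of (b) for $x = \tau$, so (b) yields $\substring{\tau}{I'} \in \rho_{I'}$ for every $I' \in \Snl$. In view of (a) it suffices to show $\tau \preserves \rho_I$ for every $I$. Fix such an $I$ and a typical element $\vect{r} = \sigma h_I \in \rho_I$ with $\sigma \in H$. Applying the conclusion of (b) at $I' := \sigma(I) \in \Snl$ produces some $\pi \in H$ with $\tau h_{\sigma(I)} = \pi h_{\sigma(I)}$. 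Using the factorization $\sigma h_I = h_{\sigma(I)} \subperm{\sigma}{I}$ (an immediate consequence of Definition~\ref{def:hS}), I then compute
\[
\tau(\vect{r}) = \tau \sigma h_I = (\tau h_{\sigma(I)}) \subperm{\sigma}{I} = (\pi h_{\sigma(I)}) \subperm{\sigma}{I} = \pi \sigma h_I \in H h_I = \rho_I,
\]
which establishes $\tau \preserves \rho_I$ and finishes the reverse direction.

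The main subtlety lies in the last computation. A naive application of (b) to $\tau$ furnishes only the seed-condition $\tau h_I \in \rho_I$ for each $I$, which by itself is insufficient to force $\tau$ to stabilize the entire orbit $\rho_I$. The crucial observation is that (b) actually supplies this seed-condition at every $I' \in \Snl$; letting $I'$ range over $\sigma(I)$ for $\sigma \in H$ and using that $H$ is a group (so $\pi \sigma \in H$) transports the seed inclusion into full orbit preservation, which is the content of the displayed chain of equalities.
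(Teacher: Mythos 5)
Your proof is correct and follows essentially the same route as the paper's: both reduce condition \eqref{CAAS:b} to the inclusion $\Compn{\Patl{H}} \subseteq \Aut\{\rho_I \mid I \in \Snl\}$ and then combine the closure property $H \subseteq \Compn{\Patl{H}}$ with condition \eqref{CAAS:a}. The only difference is that you spell out the computation showing that the ``seed'' inclusions $\tau h_{I'} \in \rho_{I'}$ for all $I'$ force $\tau$ to preserve every orbit $\rho_I$ (via $\tau\sigma h_I = \tau h_{\sigma(I)}\subperm{\sigma}{I}$), a step the paper asserts without detail.
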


\begin{proof}
  Concerning the implication in condition~\eqref{CAAS:b}, observe that
  the antecedent $\forall I \in \Snl \, \exists J \in \Snl \colon
  \red(\substring{x}{I}) \in \red(\rho_J)$ expresses the fact that
  each $\ell$\hyp{}pattern of $x$, now considered as a permutation $x \in \nset{n}^n_{\neq} = \symm{n}$,
  coincides with an $\ell$\hyp{}pattern of some
  permutation from $H$, i.e., $x \in \Compn{\Patl{H}}$. Moreover, the
  consequent $\forall I \in \Snl \colon \substring{x}{I} \in \rho_I$
  expresses the fact that $x$ coincides with some $\pi \in H$ on each
  $\ell$\hyp{}element subset; thus $x \preserves \rho_I$ for each
  $I$, i.e., $x \in \Aut \{\rho_I \mid I \in \Snl\}$. Thus the
  implication states that $\Compn{\Patl{H}} \subseteq \Aut \{\rho_I
  \mid I \in \Snl\}$.

  Assume now that $H = \Compn{S}$ for some $S \subseteq
  \symm{\ell}$. Condition~\eqref{CAAS:a} is necessary by
  Proposition~\ref{prop:l-closed}. With the properties of Galois connections (see Section~\ref{sec:3}),
  we have $H = \Compn{S} = \Compn{\Patl{\Compn{S}}} = \Compn{\Patl{H}}$, so condition~\eqref{CAAS:b} is also necessary.

  Assume then that conditions \eqref{CAAS:a} and \eqref{CAAS:b}
  hold. Since $\Compn{\Patl{}}$ is a closure operator, we have $H
  \subseteq \Compn{\Patl{H}}$. By conditions~\eqref{CAAS:a} and
  \eqref{CAAS:b}, we also have $\Compn{\Patl{H}} \subseteq \Aut
  \{\rho_I \mid I \in \Snl\} = H$. Thus $H = \Compn{\Patl{H}}$.
\end{proof}


\section{On the monotone Galois connection \newline $\boldsymbol{(\gPat, \gComp)}$}
\label{sec:5}

The monotone Galois connection $(\gPat, \gComp)$ is perhaps more interesting than $(\Pat, \Comp)$, because it makes a correspondence between well\hyp{}understood algebraic objects: permutation groups of two different degrees.
It may also be the more useful of the two when one wishes to investigate permutation classes in which every level is a group in greater detail than Atkinson and Beals did in \cite{AtkBea,AtkBea2001}.

As briefly mentioned in the previous section, the $\ell$\hyp{}pattern subgroups of $\symm{n}$ that are of the form $\Compn G$ for some subgroup $G$ of $\symm{\ell}$ can be described as automorphism groups of relations in a way that is much simpler than the one presented in Theorem~\ref{thm:Comp-Aut-arbitrary-S}.
In fact, as we will see in Theorem~\ref{thm:CompPat}, such subgroups are automorphism groups of a single relation of arity at most $\ell$.

Another goal of this section is to describe the Galois closures \linebreak $\gCompn \gPatl H$ and kernels $\gPatl \gCompn G$ as automorphism groups of relations that can be constructed from the group $H$ or $G$.
This is done in Theorems~\ref{thm:CompPat} and~\ref{thm:gPatgComp}.

In what follows, $\ell$ and $n$ are fixed integers satisfying $\ell \leq n$, and we will make use of the following constructions.
Let $k \leq \ell \leq n$, and let $\rho \subseteq \nset{n}^k$ and
$\sigma \subseteq \nset{\ell}^k$.
Define the relations $\CHECK{\rho} \subseteq \nset{\ell}^k$ and $\HAT{\sigma} \subseteq \nset{n}^k$ as
\begin{align*}
\CHECK{\rho} &:= \{h_I^{-1}(\vect{r}) \mid \vect{r} \in \rho, \, \range\vect{r} \subseteq I \in \Snl\}, \\
\HAT{\sigma} &:= \{h_J(\vect{s}) \mid \vect{s} \in \sigma, \, J \in \Snl\}.
\end{align*}
For $\sigma\subseteq\nset{\ell}^{\ell}_{\neq}$ we have
$\HAT\sigma=
  \{\vect{u} \in \nset{n}^\ell_{\neq} \mid \red(\vect{u}) \in \sigma\}$.

We are also going to consider the following condition for $\rho$:
\begin{equation}
\label{eq:condition-rho}
\forall \vect{r} \in \nset{n}^{k} \, \forall I, J \in \Snl: \range\vect{r} \subseteq
I \land \vect{r} \in \rho \implies h_J h_I^{-1}(\vect{r}) \in \rho.
\end{equation}

We remark that for $\rho\subseteq\nset{n}^{\ell}_{\neq}$
($k=\ell$) and with the notation from Section~\ref{sect:prelim} 
(cf.\ Lemma~\ref{A4}(ii)) this condition can be written as follows:
\begin{equation}
  \label{eq:condition}
  \forall \vect{r}, \vect{s} \in \nset{n}^\ell_{\neq} \colon \bigl( \vect{r} \in \rho \, \wedge \, \red(\vect{r}) = \red(\vect{s}) \bigr) \implies \vect{s} \in \rho,
\end{equation}
i.e., all tuples with a particular reduced form belong to $\rho$ whenever
one such tuple belongs to $\rho$.
A $k$\hyp{}ary $(k \leq \ell)$ relation satisfying condition
\eqref{eq:condition-rho} is called a \emph{pattern closed relation},
for short \emph{pc\hyp{}relation}. For $H \subseteq \symm{n}$ let
\[
  \pcInv H :=\{\rho \in \Inv H \mid \text{$\rho$ is a pc\hyp{}relation}\}
\]
denote the set of all invariant pc\hyp{}relations of $H$.

\begin{lemma}
\label{lem:condition-rho}
Let $k \leq \ell \leq n$ and $\rho \subseteq \nset{n}^k_{\neq}$.
Then $\rho \subseteq \HATCHECK{\rho}$.
Furthermore, $\rho = \HATCHECK{\rho}$ if and only if $\rho$ is a pc\hyp{}relation.
\end{lemma}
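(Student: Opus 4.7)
The plan is to unfold the definitions of $\CHECK{}$ and $\HAT{}$ and appeal directly to condition~\eqref{eq:condition-rho}; no nontrivial machinery is needed. Before starting I would note that for any $\vect{r} \in \nset{n}^k_{\neq}$ with $k \leq \ell \leq n$, the set $\range\vect{r}$ has $k \leq \ell$ elements, so it can be extended to some $I \in \Snl$ with $\range\vect{r} \subseteq I$, and then $h_I^{-1}$ is legitimately applicable componentwise to $\vect{r}$, yielding a tuple in $\nset{\ell}^k_{\neq}$.

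For the inclusion $\rho \subseteq \HATCHECK{\rho}$, given $\vect{r} \in \rho$ I would pick any such $I \in \Snl$ containing $\range\vect{r}$. By the definition of $\CHECK{}$, the tuple $h_I^{-1}(\vect{r})$ belongs to $\CHECK{\rho}$. Taking $J := I$ in the definition of $\HAT{}$ gives $h_I(h_I^{-1}(\vect{r})) = \vect{r} \in \HATCHECK{\rho}$, as required.

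For the characterization, the implication from pc to fixed point is immediate: if $\rho$ is a pc\hyp{}relation, any element of $\HATCHECK{\rho}$ has, by construction, the form $h_J(h_I^{-1}(\vect{r}))$ for some $\vect{r} \in \rho$ with $\range\vect{r} \subseteq I \in \Snl$ and some $J \in \Snl$. Condition~\eqref{eq:condition-rho} then yields $h_J h_I^{-1}(\vect{r}) \in \rho$, giving $\HATCHECK{\rho} \subseteq \rho$, which combined with part~(i) gives equality. Conversely, assuming $\rho = \HATCHECK{\rho}$, I would verify~\eqref{eq:condition-rho} by taking arbitrary $\vect{r} \in \rho$ and $I, J \in \Snl$ with $\range\vect{r} \subseteq I$; then $h_I^{-1}(\vect{r}) \in \CHECK{\rho}$ and hence $h_J(h_I^{-1}(\vect{r})) \in \HAT{\CHECK{\rho}} = \HATCHECK{\rho} = \rho$, which is exactly the required conclusion.

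No step looks like a serious obstacle here; the proof is essentially symbol pushing. The only point requiring a moment of care is the bookkeeping that all the tuples have the correct arity and land in the correct subsets (so that the maps $h_I$, $h_I^{-1}$ are applicable), which is handled by the initial observation about extending $\range\vect{r}$ to an $\ell$\hyp{}subset.
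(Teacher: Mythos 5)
Your proof is correct and follows essentially the same route as the paper's: unfold the definitions, use $J=I$ to get $\rho\subseteq\HATCHECK{\rho}$, and observe that condition~\eqref{eq:condition-rho} is precisely the statement that the generic element $h_J h_I^{-1}(\vect{r})$ of $\HATCHECK{\rho}$ lies in $\rho$. Your explicit remark that $\range\vect{r}$ can always be extended to some $I\in\Snl$ (so the construction is never vacuous) is a small piece of bookkeeping the paper leaves implicit.
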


\begin{proof}
In order to prove $\rho \subseteq \HATCHECK{\rho}$, let $\vect{r} \in \rho$.
Let $I \in \Snl$ be any set satisfying $\range\vect{r}\subseteq I$.
Then $h_I^{-1}(\vect{r}) \in \CHECK{\rho}$ by the definition of $\CHECK{\rho}$, and consequently $\vect{r} = h_I \circ h_I^{-1}(\vect{r}) \in \HATCHECK{\rho}$.

Assume now that $\rho$ satisfies condition \eqref{eq:condition-rho}.
In order to show that $\HATCHECK{\rho} \subseteq \rho$, let $\vect{s} \in \HATCHECK{\rho}$.
Then $\vect{s} = h_J \circ h_I^{-1}(\vect{r})$ for some $I, J \in \Snl$ and $\vect{r} \in \rho$ satisfying $\range\vect{r} \subseteq I$.
By condition \eqref{eq:condition-rho}, we have $\vect{s} \in \rho$.

Assume then that $\rho = \HATCHECK{\rho}$.
Let $\vect{r} \in \rho$, $I, J \in \Snl$ such that $\range\vect{r} \subseteq I$.
Then $h_J h_I^{-1} (\vect{r}) \in \HATCHECK{\rho} = \rho$, and we conclude that $\rho$ satisfies condition \eqref{eq:condition-rho}.
\end{proof}

It turns out that every $\ell$\hyp{}ary relation of the form
$\HAT{\sigma}$ is a pc\hyp{}relation. This follows immediately from the following lemma.
\begin{lemma}
\label{lem:Ghatcheck=G}
Let $\sigma\subseteq\nset{\ell}^{\ell}_{\neq}$. Then $\CHECKHAT{\sigma} = \sigma$.
\end{lemma}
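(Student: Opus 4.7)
The plan is to unpack the definitions of $\HAT{\cdot}$ and $\CHECK{\cdot}$ in the special case $k=\ell$, making use of the characterization $\HAT{\sigma} = \{\vect{u} \in \nset{n}^\ell_{\neq} \mid \red(\vect{u}) \in \sigma\}$ stated just before the lemma, together with the identity $\red(\vect{u}) = h^{-1}_{\range\vect{u}} \circ \vect{u}$ from Lemma~\ref{A4}(ii). The key observation driving both inclusions is that if $\vect{r} \in \nset{n}^\ell_{\neq}$, then $\range\vect{r}$ has exactly $\ell$ elements, and so if $I \in \Snl$ satisfies $\range\vect{r} \subseteq I$, then necessarily $I = \range\vect{r}$ by cardinality.

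For the inclusion $\sigma \subseteq \CHECKHAT{\sigma}$, I would take an arbitrary $\vect{s} \in \sigma \subseteq \nset{\ell}^\ell_{\neq}$ and choose $J = \nset{\ell}$ in the definition of $\HAT{\sigma}$. Since $h_{\nset{\ell}}$ is the identity map on $\nset{\ell}$, this yields $\vect{r} := h_J(\vect{s}) = \vect{s} \in \HAT{\sigma}$, a tuple with $\range\vect{r} = \nset{\ell}$. Choosing $I := \nset{\ell}$ in the definition of $\CHECK{\HAT{\sigma}}$ then gives $h_I^{-1}(\vect{r}) = \vect{s}$, so $\vect{s} \in \CHECKHAT{\sigma}$.

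For the reverse inclusion $\CHECKHAT{\sigma} \subseteq \sigma$, I would take $\vect{t} \in \CHECKHAT{\sigma}$, so that by definition $\vect{t} = h_I^{-1}(\vect{r})$ for some $\vect{r} \in \HAT{\sigma}$ and some $I \in \Snl$ with $\range\vect{r} \subseteq I$. Since $\vect{r} \in \nset{n}^\ell_{\neq}$, we have $|\range\vect{r}| = \ell = |I|$, whence $I = \range\vect{r}$. Applying Lemma~\ref{A4}(ii), one obtains
\[
\vect{t} = h_{\range\vect{r}}^{-1} \circ \vect{r} = \red(\vect{r}),
\]
and the characterization $\HAT{\sigma} = \{\vect{u} \in \nset{n}^\ell_{\neq} \mid \red(\vect{u}) \in \sigma\}$ gives $\vect{t} = \red(\vect{r}) \in \sigma$.

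There is no genuine obstacle here; the proof amounts to careful bookkeeping with the definitions. The only point that requires attention is the cardinality argument forcing $I = \range\vect{r}$ in the second inclusion, which is what makes $k = \ell$ the operative hypothesis (for $k < \ell$ the identity $\CHECKHAT{\sigma} = \sigma$ would fail because $I$ could be a proper superset of $\range\vect{r}$ and several reductions could collapse to the same tuple in $\CHECK{\HAT{\sigma}}$).
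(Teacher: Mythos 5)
Your proof is correct and takes essentially the same approach as the paper: both arguments hinge on the observation that for $\vect{r}\in\nset{n}^{\ell}_{\neq}$ the condition $\range\vect{r}\subseteq I$ with $\card{I}=\ell$ forces $I=\range\vect{r}$, after which $h_I^{-1}$ undoes $h_J$ exactly. The paper merely condenses this into one chain of set equalities instead of two element\hyp{}wise inclusions.
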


\begin{proof}
Recall $\HAT{\sigma} = \{h_J \circ \vect{s} \mid \vect{s} \in \sigma, J \in \Snl\}$.
Thus
\begin{align*}
\CHECKHAT{\sigma}
&= \{h^{-1}_I \circ \vect{r} \mid \vect{r} \in \HAT{\sigma}, \, \range\vect{r} \subseteq I \in \Snl\} \\
&= \{h^{-1}_{\range \vect{r}} \circ \vect{r} \mid \vect{r} \in \HAT{\sigma}\} 
= \{h^{-1}_J \circ h_J \circ \vect{s} \mid \vect{s} \in \sigma, \, J \in \Snl\} \\
&= \sigma.
\end{align*}
Concerning the second line in the equalities displayed above, note that the inclusion $\range \vect{r} \subseteq I$ holds as an equality, because $\vect{r}$ is an $\ell$\hyp{}tuple without repeated entries, i.e., $\card{\range \vect{r}} = \ell = \card{I}$; moreover, $\range (h_J \circ \vect{s}) = J$.
\end{proof}

The following theorem characterizes the closure operator
$\gCompn\gPatl$ of the monotone Galois connection $(\gPat,\gComp)$.

\begin{theorem}\label{thm:CompPat}
  \begin{itemize}
  \item[\rm(A)] $\gCompn\gPatl H=\Aut\pcInv H$ for $H\leq\symm{n}$.
  \item[\rm(B)] Let $H$ be a subgroup of $\symm{n}$. Then the
    following are equivalent:
    \begin{itemize}
    \item[\rm(a)] $H$ is Galois closed, i.e., $H=\gCompn\gPatl H$,
    \item[\rm(a)$'$] $\exists\, G\leq\symm{\ell}: H=\gCompn G$,
    \item[\rm(b)] $H=\Aut\pcInv H$,
    \item[\rm(c)] $\exists\, k\leq
      \ell\,\exists\,\rho\subseteq\nset{n}^{k}_{\neq}: 
      \rho=\HATCHECK{\rho}\land H=\Aut\rho$.
    \end{itemize}
  \end{itemize}
  
\end{theorem}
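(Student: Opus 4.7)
The plan is to reduce the entire statement to a single explicit pc\hyp{}relation. I start by establishing the \textbf{key identity}
\[
\Compn G = \Aut \HAT{\groupasrel{G}} \qquad \text{for every subgroup } G \leq \symm{\ell}.
\]
The relation $\HAT{\groupasrel{G}} \subseteq \nset{n}^{\ell}_{\neq}$ is a pc\hyp{}relation by Lemma~\ref{lem:Ghatcheck=G}, and it consists of exactly those $\ell$\hyp{}tuples whose reduced form lies in $G$. Using the definitional computation $\red(\tau \circ h_J \circ \vect{s}) = \subperm{\tau}{J} \circ \vect{s}$ (Lemma~\ref{A4}(ii)), a permutation $\tau \in \symm{n}$ preserves $\HAT{\groupasrel{G}}$ if and only if $\subperm{\tau}{J} \circ \vect{s} \in G$ for all $J \in \Snl$ and all $\vect{s} \in G$. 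Taking $\vect{s} = \asc{\ell}$ and using that $G$ is a group, this is equivalent to $\patt{\ell}{\tau} \subseteq G$, i.e., $\tau \in \Compn G$.

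For part (A) I prove both inclusions. For $\Aut \pcInv H \subseteq \gCompn \gPatl H$, set $G := \gPatl H$; then $\rho_0 := \HAT{\groupasrel{G}}$ lies in $\pcInv H$ because $H \subseteq \Compn G = \Aut \rho_0$, whence $\Aut \pcInv H \subseteq \Aut \rho_0 = \gCompn \gPatl H$. For the converse, fix any $\rho \in \pcInv H$ of arity $k \leq \ell$. The main computation is that $\Patl H$ acts on $\CHECK \rho$ by left composition: for $\pi \in H$, $\vect{s} \in \CHECK \rho$, and $J \in \Snl$, the pc\hyp{}property yields $h_J \circ \vect{s} \in \rho$; applying $\pi$ and using the definitional identity $\pi \circ h_J = h_{\pi(J)} \circ \subperm{\pi}{J}$ forces $\subperm{\pi}{J} \circ \vect{s} \in \CHECK \rho$. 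Because $\Patl H$ is closed under inverses (Lemma~\ref{lem:rev-comp-inv}(iii) applied to $\pi^{-1} \in H$), this left action extends to all of $\gPatl H = \gensg{\Patl H}$. Now for $\tau \in \gCompn \gPatl H$ and $\vect{r} \in \rho$, choose any $I \in \Snl$ with $\range \vect{r} \subseteq I$, write $\vect{r} = h_I \circ \vect{s}$ with $\vect{s} := h_I^{-1}(\vect{r}) \in \CHECK \rho$, and conclude
\[
\tau(\vect{r}) \;=\; h_{\tau(I)} \circ (\subperm{\tau}{I} \circ \vect{s}) \;\in\; \HATCHECK{\rho} \;=\; \rho.
\]

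Part (B) is then essentially formal. The equivalence (a) $\iff$ (a$'$) is the standard description of Galois\hyp{}closed elements of the monotone Galois connection $(\gPatl, \gCompn)$: for (a) $\Rightarrow$ (a$'$) take $G := \gPatl H$, while for (a$'$) $\Rightarrow$ (a) apply $\gCompn$ to the kernel inequality $\gPatl \gCompn G \subseteq G$ and combine with the closure inequality $H \subseteq \gCompn \gPatl H$. The equivalence (a) $\iff$ (b) is just a restatement of part (A). For (b) $\Rightarrow$ (c) I take $\rho := \HAT{\groupasrel{\gPatl H}}$, which is pc by Lemma~\ref{lem:Ghatcheck=G} and satisfies $\Aut \rho = H$ by the key identity together with (b); conversely (c) $\Rightarrow$ (b) is a squeeze, since $\rho \in \pcInv H$ gives $H \subseteq \Aut \pcInv H \subseteq \Aut \rho = H$. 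The main obstacle is the arity\hyp{}$k$ version of the computation in the second inclusion of (A): one must verify that $\CHECK \rho$ (now a set of $k$\hyp{}tuples rather than full permutations of $\nset{\ell}$) is still stable under left composition by $\subperm{\pi}{J}$ for every $k \leq \ell$, and this is exactly what the pc\hyp{}closure condition~\eqref{eq:condition-rho} is designed to deliver.
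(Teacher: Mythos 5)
Your proposal is correct, and it organizes the argument in a genuinely different way from the paper. The paper proves (B) first: the implications (b)$\Rightarrow$(a) and (a)$'\Rightarrow$(c) rest on Lemma~\ref{lem:Compsigma-Autsigmahat} and Proposition~\ref{prop:Autrho-CompAutrhocheck}, which assert that $\Aut\rho$ is Galois closed with $\Aut\rho=\gCompn\Aut\CHECK{\rho}$ for every pc\hyp{}relation $\rho$; it then derives (A) from (B) by a separate bootstrap, checking $\pcInv\Aut\pcInv H=\pcInv H$ so that $H'=\Aut\pcInv H$ itself satisfies (b), and sandwiching $\gCompn\gPatl H$ between $H'$ and a single $\Aut\rho$. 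You instead prove (A) directly by two inclusions and let (B) fall out formally. The computations you use are the same ones the paper isolates in those auxiliary results — pushing $\pi\circ h_J=h_{\pi(J)}\circ\subperm{\pi}{J}$ through $\CHECK{\rho}$ and $\HAT{\sigma}$, together with $\HATCHECK{\rho}=\rho$ for pc $\rho$ — and your key identity $\Compn G=\Aut\HAT{\groupasrel{G}}$ is the ``in particular'' clause of Proposition~\ref{prop:Autrho-CompAutrhocheck}, which you establish more directly via reduced forms. What your route buys is a cleaner proof of (A); in particular, the single witness $\HAT{\groupasrel{\gPatl H}}$ for the inclusion $\Aut\pcInv H\subseteq\gCompn\gPatl H$ neatly replaces the paper's idempotency argument. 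What the paper's route buys is that its two auxiliary results are reused verbatim in the proof of Theorem~\ref{thm:gPatgComp}. Two minor remarks: extending your left action from $\Patl H$ to $\gPatl H$ needs nothing beyond the fact that $\Aut\CHECK{\rho}$ is a group containing $\Patl H$ (closure of $\Patl H$ under inverses is not required as a separate step), and the pc\hyp{}property of $\HAT{\groupasrel{G}}$ formally needs Lemma~\ref{lem:condition-rho} in addition to Lemma~\ref{lem:Ghatcheck=G}, as the paper notes just before the latter.
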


\begin{proof}
 (B): Clearly, (a)$\Leftrightarrow$(a)$'$, which follows from the properties
  of a (monotone) Galois connection.
  Also (c)$\Rightarrow$(b) is obvious, since
  $H\subseteq\Aut\pcInv H\subseteq\Aut\rho=H$.
  In order to prove (b)$\Rightarrow$(a), let $H=\Aut\pcInv
  H=\bigcap\{\Aut\rho \mid \rho\in \pcInv H\}$. In
  Proposition~\ref{prop:Autrho-CompAutrhocheck} below we shall see
  that $\Aut\rho$ is 
  Galois closed for each pc\hyp{}relation $\rho$ (i.e., if
  $\rho=\HATCHECK\rho$). Thus $H$ is the intersection of Galois closed
  groups and therefore also Galois closed. 
It remains to prove (a)$'${}$\Rightarrow$(c): Assume $H=\gCompn G$ for
$G\leq \symm{\ell}$. Then $\HAT{{\groupasrel{G}}}$ is a pc\hyp{}relation because
$\HAT{\CHECKHAT{{\groupasrel{G}}}} = \HAT{{\groupasrel{G}}} $ by
Lemma~\ref{lem:Ghatcheck=G} and we have $H=\Aut \HAT{{\groupasrel{G}}}$
by Proposition~\ref{prop:Autrho-CompAutrhocheck}.
(Recall the notation $\groupasrel{G}$ from Section~\ref{sec:4}.)

Finally, (A) follows from (B). In order to see this, let $H':=\Aut\pcInv H$ and
$H'':=\gCompn\gPatl H $. Then $H'$ is Galois closed 
by (b) (since $\pcInv\Aut\pcInv H=\pcInv H$, thus $\Aut\pcInv
H'=H'$) and contains $H$, consequently $H''\subseteq H'$ 
(because $\gCompn\gPatl$ is a closure operator).
Moreover, by (c) there exists a pc\hyp{}relation $\rho$ for the Galois
closure $H''$ such that 
$H''=\Aut\rho$. We have $\rho\in \pcInv H$
since $H\subseteq H''$. Therefore 
$H''=\Aut\rho\supseteq\Aut\pcInv H=H'$, consequently $H'=H''$.
\end{proof}

\begin{remark}
Theorem~\ref{thm:CompPat}(A) holds even for an arbitrary subset
$H\subseteq\symm{n}$ (since $\gPatl H=\gPatl\gensg{H}$ by Lemma~\ref{lem:R1},
and $\pcInv H=\pcInv\gensg H$).
\end{remark}

\begin{lemma}
\label{lem:Compsigma-Autsigmahat}
Let $k \leq \ell \leq n$, and let $\sigma \subseteq
\nset{\ell}^k_{\neq}$ be a $k$\hyp{}ary relation on $\nset{\ell}$.
Then we have $\gCompn \Aut \sigma \subseteq \Aut \HAT{\sigma}$.
\end{lemma}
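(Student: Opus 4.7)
The plan is to unravel both sides of the inclusion along the definitions and use the key identity from Definition~\ref{def:hS} that allows us to factor $\tau \circ h_J$ through the pattern $\subperm{\tau}{J}$.

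First, I would take an arbitrary $\tau \in \gCompn \Aut \sigma = \Compn \Aut \sigma$, so that by definition $\patt{\ell}{\tau} \subseteq \Aut \sigma$, i.e.\ every $\ell$\hyp{}pattern of $\tau$ preserves $\sigma$. To show $\tau \in \Aut \HAT{\sigma}$, I pick an arbitrary $\vect{r} \in \HAT{\sigma}$ and verify $\tau(\vect{r}) \in \HAT{\sigma}$. By the definition of $\HAT{\sigma}$, we may write $\vect{r} = h_J \circ \vect{s}$ for some $J \in \Snl$ and some $\vect{s} \in \sigma$. Using that $\tau(\vect{r}) = \tau \circ \vect{r}$ (componentwise application coincides with functional composition), we obtain
\[
\tau(\vect{r}) \;=\; \tau \circ h_J \circ \vect{s}.
\]

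The key step is to rewrite $\tau \circ h_J$ using Definition~\ref{def:hS}: since $\subperm{\tau}{J} = h_{\tau(J)}^{-1} \circ \tau \circ h_J$, we have $\tau \circ h_J = h_{\tau(J)} \circ \subperm{\tau}{J}$. Substituting this in gives
\[
\tau(\vect{r}) \;=\; h_{\tau(J)} \circ \subperm{\tau}{J} \circ \vect{s} \;=\; h_{\tau(J)}\bigl(\subperm{\tau}{J}(\vect{s})\bigr).
\]
Now $\subperm{\tau}{J} \in \patt{\ell}{\tau} \subseteq \Aut \sigma$, so $\subperm{\tau}{J}$ preserves $\sigma$, which yields $\subperm{\tau}{J}(\vect{s}) \in \sigma$. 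Furthermore, $\tau(J) \in \Snl$ because $\tau$ is a permutation of $\nset{n}$ and $\card{J} = \ell$. Consequently $\tau(\vect{r}) = h_{\tau(J)}(\vect{s}')$ for some $\vect{s}' \in \sigma$ and some $\ell$\hyp{}element subset $\tau(J)$ of $\nset{n}$, so by definition $\tau(\vect{r}) \in \HAT{\sigma}$, as required.

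I do not expect any serious obstacle here: once the factorization $\tau \circ h_J = h_{\tau(J)} \circ \subperm{\tau}{J}$ is invoked, the argument reduces to the simple observation that $\subperm{\tau}{J}$ belongs to $\patt{\ell}{\tau}$, hence preserves $\sigma$ by assumption. The main conceptual point is that the construction $\sigma \mapsto \HAT{\sigma}$ is precisely built to translate the pattern\hyp{}based condition ``all $\ell$\hyp{}patterns of $\tau$ lie in $\Aut \sigma$'' into the automorphism condition on the inflated relation.
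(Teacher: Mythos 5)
Your proof is correct and follows essentially the same route as the paper's: write $\vect{r}=h_J\circ\vect{s}$, use the compatibility of $\tau$ with $\Aut\sigma$ to get $\subperm{\tau}{J}\in\Aut\sigma$, and factor $\tau\circ h_J=h_{\tau(J)}\circ\subperm{\tau}{J}$ to land back in $\HAT{\sigma}$. The paper performs the identical computation (inserting $h_{\tau(J)}\circ h_{\tau(J)}^{-1}$), so there is nothing to add.
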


\begin{proof}
Let $\pi \in \Compn \Aut \sigma$ and let $\vect{u} \in \HAT{\sigma}$.
Then $\vect{u} = h_J \circ \vect{s}$ for some $\vect{s} \in \sigma$ and $J \in \Snl$.
Since $\pi$ is compatible with $\Aut \sigma$, we have $\pi_J \in \Aut \sigma$, that is, $\pi_J(\vect{s}) \in \sigma$.
Consequently,
\[
\pi(\vect{u})
= \pi \circ h_J(\vect{s})
= h_{\pi(J)} \circ h_{\pi(J)}^{-1} \circ \pi \circ h_J (\vect{s})
= h_{\pi(J)} \circ \pi_J (\vect{s})
\in \HAT{\sigma},
\]
and we conclude that $\pi \in \Aut \HAT{\sigma}$.
\end{proof}

\begin{proposition}
\label{prop:Autrho-CompAutrhocheck}
Let $k \leq \ell \leq n$, and let $\rho \subseteq \nset{n}^k_{\neq}$ be a $k$\hyp{}ary pc\hyp{}relation on $\nset{n}$.
Then $\Aut\rho$ is Galois closed, more precisely 
$\Aut \rho = \gCompn \Aut\CHECK{\rho}$. 
In particular we have $\gCompn G = \Aut\HAT{{\groupasrel{G}}}$ for any $G \leq \symm{\ell}$.
\end{proposition}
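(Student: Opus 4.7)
The plan is to prove the equality $\Aut\rho=\gCompn\Aut\CHECK\rho$ by mutual inclusion, and then derive the "in particular" clause as an immediate application by taking $\rho:=\HAT{\groupasrel G}$, which is a pc\hyp{}relation whose "check" returns $\groupasrel G$ by Lemma~\ref{lem:Ghatcheck=G}, and using $\Aut\groupasrel G=G$.

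First I would dispose of the easy inclusion $\gCompn\Aut\CHECK\rho\subseteq\Aut\rho$. Applying Lemma~\ref{lem:Compsigma-Autsigmahat} to $\sigma:=\CHECK\rho\subseteq\nset{\ell}^k_{\neq}$ yields $\gCompn\Aut\CHECK\rho\subseteq\Aut\HATCHECK\rho$, and since $\rho$ is a pc\hyp{}relation Lemma~\ref{lem:condition-rho} gives $\HATCHECK\rho=\rho$, completing this direction.

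The real content lies in the reverse inclusion $\Aut\rho\subseteq\gCompn\Aut\CHECK\rho$. I would fix $\pi\in\Aut\rho$; since $\Aut\CHECK\rho$ is a subgroup of $\symm\ell$, Proposition~\ref{prop:S-subgroup} says $\gCompn\Aut\CHECK\rho=\Compn\Aut\CHECK\rho$, so it suffices to prove $\patt\ell\pi\subseteq\Aut\CHECK\rho$, i.e., $\subperm\pi I\in\Aut\CHECK\rho$ for every $I\in\Snl$. Fix $I$ and $\vect t\in\CHECK\rho$. By definition $\vect t=h_K^{-1}(\vect r)$ for some $\vect r\in\rho$ with $\range\vect r\subseteq K\in\Snl$. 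The key step is to lift $\vect t$ back into $\rho$ via the set $I$: because $\rho$ is a pc\hyp{}relation, condition~\eqref{eq:condition-rho} applied with $J:=I$ gives $h_I\circ h_K^{-1}(\vect r)=h_I(\vect t)\in\rho$. Since $\pi\in\Aut\rho$, we obtain $\pi\circ h_I(\vect t)\in\rho$, and as $\range(\pi\circ h_I(\vect t))\subseteq\pi(I)\in\Snl$, the definition of $\CHECK\rho$ gives $h_{\pi(I)}^{-1}\circ\pi\circ h_I(\vect t)\in\CHECK\rho$. This last expression is precisely $\subperm\pi I(\vect t)$, which is what we needed.

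The main obstacle is really only the bookkeeping in the reverse direction: one must recognize that the pc\hyp{}relation property is the exact tool that allows transporting a tuple $\vect r$ whose range lies in an arbitrary $K\in\Snl$ to the specific set $I$ chosen to build the pattern $\subperm\pi I$. Once this lifting trick is in place, the computation collapses. Finally, to obtain the "in particular" assertion, set $\rho:=\HAT{\groupasrel G}$; by Lemma~\ref{lem:Ghatcheck=G} we have $\CHECK\rho=\CHECKHAT{\groupasrel G}=\groupasrel G$, hence $\HAT{\CHECK\rho}=\HAT{\groupasrel G}=\rho$, so Lemma~\ref{lem:condition-rho} certifies that $\rho$ is a pc\hyp{}relation. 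Applying the main equality then yields $\Aut\HAT{\groupasrel G}=\gCompn\Aut\groupasrel G=\gCompn G$, using the fact (noted in Section~\ref{sec:4}) that $\Aut\groupasrel G=G$.
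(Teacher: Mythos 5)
Your proof is correct and follows essentially the same route as the paper: the easy inclusion via Lemma~\ref{lem:Compsigma-Autsigmahat} together with $\HATCHECK{\rho}=\rho$ from Lemma~\ref{lem:condition-rho}, and the reverse inclusion by using the pc\hyp{}condition to transport a representative $\vect{r}$ into the index set attached to the pattern $\subperm{\pi}{I}$ before applying $\pi$ and reducing. The derivation of the ``in particular'' clause from Lemma~\ref{lem:Ghatcheck=G} and $\Aut\groupasrel{G}=G$ also matches the paper's argument.
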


\begin{proof}
Let $\pi \in \Aut \rho$. We show first that $\pi_J \preserves \CHECK{\rho}$ for all $J \in \Snl$.
Let $\vect{s} \in \CHECK{\rho}$.
Then $\vect{s} = h_I^{-1}(\vect{r})$ for some $\vect{r} \in \rho$ and $\range\vect{r} \subseteq I \in \Snl$.
It follows from condition~\eqref{eq:condition-rho} that $h_J h_I^{-1}(\vect{r}) \in \rho$.
Since $\pi \preserves \rho$, we also have $\pi h_J h_I^{-1}(\vect{r}) \in \rho$.
Consequently,
\[
\pi_J(\vect{s}) = h_{\pi(J)}^{-1} \circ \pi \circ h_J \circ h_I^{-1} (\vect{r}) \in \CHECK{\rho},
\]
and we conclude that $\pi \in \Compn \Aut \CHECK{\rho} = \gCompn \Aut \CHECK{\rho}$.
Therefore $\Aut \rho \subseteq \gCompn \Aut \CHECK{\rho}$.
The converse inclusion follows immediately from
Lemma~\ref{lem:Compsigma-Autsigmahat}:
$\gCompn \Aut \CHECK{\rho} \subseteq \Aut \HATCHECK{\rho} = \Aut \rho$
because  $\rho = \HATCHECK{\rho}$ by Lemma~\ref{lem:condition-rho}.

Finally, recall that any $G \leq \symm{\ell}$ can be considered as an $\ell$\hyp{}ary relation $\groupasrel{G} \subseteq \nset{\ell}^{\ell}$.
Thus $\rho := \HAT{{\groupasrel{G}}}$ is a pc\hyp{}relation and $\CHECK{\rho} = \groupasrel{G}$. It holds that $\Aut \groupasrel{G} = G$. Thus we get
$\Aut \HAT{{\groupasrel{G}}} = \Aut \rho = \gCompn \Aut \CHECK{\rho} = \gCompn \Aut \groupasrel{G} = \gCompn G$.
\end{proof}

Now we take a look at the other side of the monotone Galois connection
$(\gPatl, \gCompn)$, namely at the kernel operator $\gPatl\gCompn$. We
want to describe $G':=\gPatl\gCompn G$ as the automorphism group of some
relations. Clearly, because of $\Aut\Inv G' = G'\subseteq G = \Aut\Inv G$ we have to extend the set $\Inv G$ in order to get the
set $\Inv G'$. We shall see that the following definition will fit our purposes.

\begin{definition}\label{G3}
Let $G \leq \symm{\ell}$. A relation $\sigma \subseteq \nset{\ell}^{k}$
($k \leq \ell$) is called a \emph{pattern closed extended invariant \textup{(}pc\hyp{}extended invariant\textup{)} of $G$} if
$\CHECKHAT{\sigma} = \sigma$ and $\HAT{\sigma} \in \Inv\Aut \HAT{{\groupasrel{G}}}$. The
set of all pc\hyp{}extended invariants of $G$ is denoted by $\pcExt G$.
\end{definition}

\begin{remark}\label{G4}
Note that $\sigma \in \pcExt G$ implies $\HAT{\sigma} \in \pcInv \Aut \HAT{{\groupasrel{G}}}$.
Analogously to Lemma~\ref{lem:condition-rho} we have
$\sigma \subseteq \CHECKHAT{\sigma}$, and, furthermore, $\CHECKHAT{\sigma} = \sigma$ is
equivalent to the following condition:
\[
 \forall \vect{s} \in \nset{\ell}^{k} \, \forall I, J \in \Snl \colon
 \vect{s} \in \sigma \land \range{h_I(\vect s)} \subseteq J
 \implies h_I^{-1} h_J(\vect{s}) \in \sigma.
\]
\end{remark}

\begin{lemma}
\label{lem:Aut-rhocheck}
Let $H = \Aut \rho \leq \symm{n}$ for a pc\hyp{}relation $\rho \subseteq
\nset{n}^k$ where $k \leq \ell \leq n$. 
Then we have $\gPatl{H} \subseteq \Aut \CHECK{\rho}$.
\end{lemma}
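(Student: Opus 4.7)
The plan is to reduce this to a computation that, in fact, has essentially already been carried out inside the proof of Proposition~\ref{prop:Autrho-CompAutrhocheck}. Since $\Aut \CHECK{\rho}$ is by definition a group, and $\gPatl{H} = \gensg{\Patl{H}}$, it suffices to show the (non-group) inclusion $\Patl{H} \subseteq \Aut \CHECK{\rho}$. So I would pick an arbitrary element $\sigma$ of $\Patl H$, write it as $\sigma = \pi_I = h_{\pi(I)}^{-1} \circ \pi \circ h_I$ for some $\pi \in H$ and $I \in \Snl$, and verify directly that $\pi_I$ preserves $\CHECK{\rho}$.

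To verify preservation, I would take an arbitrary $\vect{s} \in \CHECK{\rho}$ and, using the definition of $\CHECK{\rho}$, write it as $\vect{s} = h_{I'}^{-1}(\vect{r})$ for some $\vect{r} \in \rho$ with $\range \vect{r} \subseteq I' \in \Snl$. Because $\rho$ is a pc\hyp{}relation, condition~\eqref{eq:condition-rho} applied with the pair $(I', I)$ yields $h_I \circ h_{I'}^{-1}(\vect{r}) \in \rho$. Since $\pi \in H = \Aut \rho$, applying $\pi$ keeps us in $\rho$, so $\pi \circ h_I \circ h_{I'}^{-1}(\vect{r}) \in \rho$. Factoring through $h_{\pi(I)} \circ h_{\pi(I)}^{-1}$ gives
\[
\pi_I(\vect{s}) = h_{\pi(I)}^{-1} \circ \pi \circ h_I \circ h_{I'}^{-1}(\vect{r}) \in \CHECK{\rho},
\]
where the last membership holds by the definition of $\CHECK{\rho}$ applied to the element $\pi \circ h_I \circ h_{I'}^{-1}(\vect{r}) \in \rho$ and the set $\pi(I) \in \Snl$ (which contains the range of that tuple). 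This shows $\pi_I \in \Aut \CHECK{\rho}$, completing the argument.

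There is no real obstacle here: the nontrivial content is exactly the identity $\pi_J \preserves \CHECK{\rho}$ for all $J \in \Snl$ that was extracted in the proof of Proposition~\ref{prop:Autrho-CompAutrhocheck}, and the only new observation is the step from $\Patl H$ to $\gPatl H = \gensg{\Patl H}$, which is free once one recalls that automorphism groups are closed under products and inverses.
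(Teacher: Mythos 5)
Your proof is correct and follows essentially the same route as the paper's: take $\sigma = \pi_I \in \Patl H$, take $\vect{s} = h_{I'}^{-1}(\vect{r}) \in \CHECK{\rho}$, use condition~\eqref{eq:condition-rho} to get $h_I h_{I'}^{-1}(\vect{r}) \in \rho$, apply $\pi \in \Aut\rho$, and read off membership of $\sigma(\vect{s})$ in $\CHECK{\rho}$ from the definition, finishing with the passage from $\Patl H$ to $\gPatl H = \gensg{\Patl H}$ via the fact that $\Aut\CHECK{\rho}$ is a group. The only difference is notational (the paper writes $\tau, J, I$ where you write $\pi, I, I'$).
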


\begin{proof}
Let $\sigma \in \Patl{H}$.
Then there exist $\tau \in H$ and $J \in \Snl$ such that $\sigma = \tau_J = h_{\tau(J)}^{-1} \circ \tau \circ h_J$.
Let $\vect{u} \in \CHECK{\rho}$.
Then $\vect{u} = h_I^{-1}(\vect{r})$ for some $\vect{r} \in \rho$ and $I \in \Snl$ satisfying $\range\vect{r} \subseteq I$.
Since $\rho$ satisfies condition \eqref{eq:condition-rho}, we have $h_J h_I^{-1}(\vect{r}) \in \rho$.
Since $\tau \preserves \rho$, we have $\tau h_J h_I^{-1}(\vect{r}) \in \rho$.
Consequently,
\[
\sigma(\vect{u})
= h_{\tau(J)}^{-1} \circ \tau \circ h_J \circ h_I^{-1}(\vect{r})
\in \CHECK{\rho}.
\]
This shows that $\Patl H \subseteq \Aut \CHECK{\rho}$.
Consequently, $\gPatl H \subseteq \Aut \CHECK{\rho}$.
\end{proof}

\begin{theorem}\label{thm:gPatgComp}
   $\gPatl\gCompn G=\Aut\pcExt G$ for $G\leq\symm{\ell}$.
\end{theorem}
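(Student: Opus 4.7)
The plan is to prove the two inclusions of $\gPatl \gCompn G = \Aut \pcExt G$ separately; throughout, let $G' := \gPatl \gCompn G$.

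For the inclusion $G' \subseteq \Aut \pcExt G$, I would fix an arbitrary $\sigma \in \pcExt G$ and show $G' \subseteq \Aut \sigma$. The first step is to verify that $\HAT{\sigma}$ is a pc\hyp{}relation: for any $\vect{r} \in \HAT{\sigma}$ with $\range \vect{r} \subseteq I \in \Snl$, the definition of $\CHECKHAT{\sigma}$ directly gives $h_I^{-1} \vect{r} \in \CHECKHAT{\sigma} = \sigma$, whence $h_J h_I^{-1} \vect{r} = h_J(h_I^{-1} \vect{r}) \in \HAT{\sigma}$ for every $J \in \Snl$, which is condition~\eqref{eq:condition-rho}. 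By definition of $\pcExt G$ we have $\HAT{\sigma} \in \Inv \Aut \HAT{{\groupasrel{G}}}$, and Proposition~\ref{prop:Autrho-CompAutrhocheck} identifies $\Aut \HAT{{\groupasrel{G}}}$ with $\gCompn G$; hence $\gCompn G \subseteq \Aut \HAT{\sigma}$. Applying Lemma~\ref{lem:Aut-rhocheck} to the pc\hyp{}relation $\HAT{\sigma}$ and invoking monotonicity of $\gPatl$, one obtains
\[
G' = \gPatl \gCompn G \subseteq \gPatl \Aut \HAT{\sigma} \subseteq \Aut \CHECK{\HAT{\sigma}} = \Aut \sigma,
\]
using $\CHECK{\HAT{\sigma}} = \sigma$ from the definition of $\pcExt G$. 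Intersecting over $\sigma \in \pcExt G$ gives the inclusion.

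For the converse inclusion $\Aut \pcExt G \subseteq G'$, the cleanest route is to exhibit a single member of $\pcExt G$ whose automorphism group is already $G'$. The natural candidate is the relation $\groupasrel{G'} \subseteq \nset{\ell}^{\ell}_{\neq}$ associated with the group $G'$ as in Section~\ref{sec:4}, for which $\Aut \groupasrel{G'} = G'$. Two things need to be checked: (i) $\CHECKHAT{\groupasrel{G'}} = \groupasrel{G'}$, which is immediate from Lemma~\ref{lem:Ghatcheck=G}; and (ii) $\HAT{\groupasrel{G'}} \in \Inv \gCompn G$. For (ii), Proposition~\ref{prop:Autrho-CompAutrhocheck} applied to $G'$ in place of $G$ yields $\Aut \HAT{\groupasrel{G'}} = \gCompn G'$, and the standard triangle identity $\gCompn \gPatl \gCompn = \gCompn$ for the monotone Galois connection $(\gPatl, \gCompn)$ gives $\gCompn G' = \gCompn \gPatl \gCompn G = \gCompn G$. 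Hence $\gCompn G \subseteq \Aut \HAT{\groupasrel{G'}}$, which is exactly (ii). Thus $\groupasrel{G'} \in \pcExt G$, and $\Aut \pcExt G \subseteq \Aut \groupasrel{G'} = G'$.

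No step presents a substantial obstacle once the groundwork of Proposition~\ref{prop:Autrho-CompAutrhocheck} and Lemmas~\ref{lem:condition-rho}--\ref{lem:Aut-rhocheck} is in place. The only point requiring minor care is checking that $\HAT{\sigma}$ is a pc\hyp{}relation when $k < \ell$, since Lemma~\ref{lem:Ghatcheck=G} is stated only for $k = \ell$; but the short computation outlined above handles arbitrary $k \leq \ell$ directly from the definitions of $\HAT{}$ and $\CHECK{}$. The argument then essentially reduces to bookkeeping with the triangle identities of the two monotone Galois connections $(\Patl, \Compn)$ (implicit through Proposition~\ref{prop:Autrho-CompAutrhocheck}) and $(\gPatl, \gCompn)$.
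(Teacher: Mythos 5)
Your proposal is correct and follows essentially the same route as the paper's own proof: the forward inclusion by intersecting $\Aut\sigma$ over $\sigma \in \pcExt G$ via Lemma~\ref{lem:Aut-rhocheck} applied to the pc\hyp{}relation $\HAT{\sigma}$, and the reverse inclusion by exhibiting $\groupasrel{G'}$ as a single pc\hyp{}extended invariant of $G$ with $\Aut\groupasrel{G'} = G'$, using Lemma~\ref{lem:Ghatcheck=G}, Proposition~\ref{prop:Autrho-CompAutrhocheck}, and the identity $\gCompn\gPatl\gCompn = \gCompn$. The only cosmetic difference is that you verify directly that $\HAT{\sigma}$ is a pc\hyp{}relation where the paper cites Remark~\ref{G4}.
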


\begin{proof}
Let $G_{1} := \gPatl\gCompn G$, $G_{2} := \Aut\pcExt G$.
Then $H := \gCompn G = \Aut \HAT{{\groupasrel{G}}} = \Aut\pcInv H$ by Theorem~\ref{thm:CompPat} and Proposition~\ref{prop:Autrho-CompAutrhocheck}.

We prove $G_{1} \subseteq G_{2}$. Let $\sigma \in \pcExt G$.
By Remark~\ref{G4}, we have $\HAT{\sigma} \in \pcInv \Aut\HAT{{\groupasrel{G}}} = \pcInv H$, which implies $\Aut\HAT{\sigma} \supseteq \Aut \pcInv H = H$.
Using Lemma~\ref{lem:Aut-rhocheck}, we get 
$G_{1} = \gPatl H \subseteq \gPatl \Aut\HAT{\sigma} \subseteq \Aut\CHECKHAT{\sigma} = \Aut \sigma$.
Consequently, $G_{1} \subseteq \bigcap \{\Aut \sigma \mid \sigma \in \pcExt G\} = G_{2}$.

For the converse implication $G_{2} \subseteq G_{1}$, consider $G_{1}$ as the $\ell$\hyp{}ary relation $\groupasrel{G_{1}} = G_1 \subseteq \nset{\ell}^{\ell}_{\neq}$.
Then $\groupasrel{G_{1}}$ is a pc\hyp{}extended invariant of $G$. Indeed, $\groupasrel{G_{1}} = \CHECKHAT{{\groupasrel{G_{1}}}}$ by Lemma~\ref{lem:Ghatcheck=G} and $\HAT{{\groupasrel{G_{1}}}}$ is a pc\hyp{}relation.
Moreover, from Proposition~\ref{prop:Autrho-CompAutrhocheck} and the properties of monotone Galois connections, we get $\Aut \HAT{{\groupasrel{G_{1}}}} = \gCompn G_{1} = \gCompn G = H$.
Then it follows that $\HAT{{\groupasrel{G_{1}}}} \in \pcInv H = \pcInv\Aut \HAT{{\groupasrel{G}}}$.
Consequently, $G_{2} = \Aut\pcExt G \subseteq \Aut \groupasrel{G_{1}} = G_{1}$.
\end{proof}


\section{Concluding remarks}
\label{sec:conclusion}

With the monotone Galois connection $(\Patl, \Compn)$ at hand, a natural question to ask is what its closed sets and kernels are.
In the current paper, we focused on those closed sets that are subgroups of $\symm{n}$.
A reasonable general description of all closed sets eludes us.

We would like to point out another direction to which our study inevitably leads.
Atkinson and Beals~\cite{AtkBea,AtkBea2001} studied \emph{group classes,} i.e., permutation classes in which all levels are groups.
In particular, they determined the possible asymptotical behaviours of level sequences $C^{(1)}, C^{(1)}, C^{(2)}, \dots$ of group classes $C$.
Furthermore, they fully described the group classes in which all levels are transitive groups.
As an attempt of refining Atkinson and Beals's results and looking deeper into the local behaviour of the level sequences of group classes, in~\cite{Lehtonen-patterns}, one of the current authors set about describing, for an arbitrary group $G \leq \symm{n}$, what the sequence
\[
\dots, \, \gPatl[n-2] G, \, \gPatl[n-1] G, \, G, \, \gCompn[n+1]{G}, \, \gCompn[n+2]{G}, \, \dots
\]
looks like.
The Galois connections $(\gPat, \gComp)$ and $(\Pat, \Comp)$ that were obtained in the current paper might be useful tools in further analysis of group classes.

We would also like to point out that the questions we are considering in this paper can be asked for other definitions of pattern involvement, e.g., consecutive, vincular, bivincular, mesh, etc.\ (see Kitaev~\cite{Kitaev}). This remains a topic of further investigation.


\section*{Acknowledgments}

The authors would like to thank Nik Ru\v{s}kuc for insightful and inspiring discussions.


\end{document}